\documentclass[11pt]{article}
\usepackage{amsfonts}
\usepackage{color}
\usepackage{amsmath,amssymb,comment}
\newtheorem{theo}{Theorem}[section]
\newtheorem{lem}[theo]{Lemma}

\newtheorem{defi}[theo]{Definition}

\newcommand{\mysection}[1]{\section{#1} \setcounter{equation}{0}}
\newcommand{\proof}{{\sc Proof.} \quad}
\newcommand{\proofc}{{\sc Proof} \ }
\newcommand{\be}{\begin{equation} \label}
\newcommand{\ee}{\end{equation}}
\newcommand{\bea}{\begin{eqnarray}\label}
\newcommand{\eea}{\end{eqnarray}}
\newcommand{\bas}{\begin{eqnarray*}}
\newcommand{\eas}{\end{eqnarray*}}
\newcommand{\bit}{\begin{itemize}}
\newcommand{\eit}{\end{itemize}}
\newcommand{\qed}{\hfill$\Box$ \vskip.2cm}
\newcommand{\nn}{\nonumber}
\newcommand{\R}{\mathbb{R}}
\newcommand{\N}{\mathbb{N}}
\newcommand{\pO}{\partial\Omega}

\newcommand{\eps}{\varepsilon}

\newcommand{\supp}{{\rm supp} \, }

\newcommand{\wto}{\rightharpoonup}
\newcommand{\wsto}{\stackrel{\star}{\rightharpoonup}}
\newcommand{\hra}{\hookrightarrow}
\newcommand{\io}{\int_\Omega}

\newcommand{\del}{\delta}
\newcommand{\al}{\alpha}

\newcommand{\sig}{\sigma}
\newcommand{\pa}{\partial}
\newcommand{\bom}{\overline{\Omega}}
\newcommand{\Om}{\Omega}

\newcommand{\ov}{\overline}

\newcommand{\hs}{\hspace*}
\newcommand{\sm}{\setminus}

\newcommand{\vp}{\varphi}
\newcommand{\lbal}{\left\{ \begin{array}{l}}
\newcommand{\lball}{\left\{ \begin{array}{ll}}
\newcommand{\ear}{\end{array} \right.}

\newcommand{\abs}{\\[5pt]}
\newcommand{\Abs}{\\[5mm]}

\newcommand{\adb}{\allowdisplaybreaks}

\newcommand{\ts}{t_\star}
\newcommand{\Kf}{K_f}

\newcommand{\oz}{\ov{z}}
\newcommand{\ueps}{u_\eps}
\newcommand{\veps}{v_\eps}

\newcommand{\feps}{f_\eps}

\newcommand{\zeps}{z_\eps}
\newcommand{\Teps}{\Theta_\eps}
\newcommand{\elleps}{\ell_\eps}
\newcommand{\vepsx}{v_{\eps x}}
\newcommand{\vepsxx}{v_{\eps xx}}

\newcommand{\vepsxxxx}{v_{\eps xxxx}}
\newcommand{\vepst}{v_{\eps t}}
\newcommand{\uepsx}{u_{\eps x}}
\newcommand{\uepsxx}{u_{\eps xx}}
\newcommand{\uepsxxx}{u_{\eps xxx}}
\newcommand{\uepst}{u_{\eps t}}
\newcommand{\Tepsx}{\Theta_{\eps x}}
\newcommand{\Tepsxx}{\Theta_{\eps xx}}
\newcommand{\Tepst}{\Theta_{\eps t}}
\newcommand{\zepsx}{z_{\eps x}}
\newcommand{\zepsxx}{z_{\eps xx}}
\newcommand{\zepst}{z_{\eps t}}
\newcommand{\epss}{\eps_\star}

\newcommand{\zd}{\zeta_\delta}
\newcommand{\Tinf}{\Theta_\infty}
\newcommand{\uinf}{u_\infty}
\begin{document}
\adb
\title{
Large time stabilization of rough-data solutions in one-dimensional nonlinear thermoelasticity}
\author{
Michael Winkler\footnote{michael.winkler@math.uni-paderborn.de}\\
{\small Universit\"at Paderborn, Institut f\"ur Mathematik}\\
{\small 33098 Paderborn, Germany} }
\date{}
\maketitle
\begin{abstract}
\noindent 
In an open bounded real interval $\Om$, the model for one-dimensional thermoelasticity given by
\bas
	\lball
	u_{tt} = u_{xx} - \big(f(\Theta)\big)_x,
	\qquad & x\in\Om, \ t>0, \\[1mm]
	\Theta_t = \Theta_{xx} - f(\Theta) u_{xt},
	\qquad & x\in\Om, \ t>0, 
	\ear
\eas
is considered along with homogeneous boundary conditions of Dirichlet type for $u$ and of Neumann type for $\Theta$,
under the assumption that $f\in C^1([0,\infty))$ satisfies $f(0)=0$, $f'\in L^\infty((0,\infty))$ and $f'>0$ on $[0,\infty)$.
The focus is on initial data which are merely required to be consistent with the fundamental principles of energy conservation
and entropy nondecrease, by satisfying 
\bas
	u_0\in W_0^{1,2}(\Om),
	\qquad 
	u_{0t} \in L^2(\Om)
	\qquad \mbox{and} \qquad
	0 \le \Theta_0\in L^1(\Om), \ \Theta_0 \not\equiv 0.
\eas
Despite an apparent lack of favorable compactness properties that have underlain previous related studies on more regular settings, 
it is shown that corresponding weak solutions, the existence of which is known from the literature, stabilize in the sense that
\bas
	\lim_{t\to\infty} \|u(\cdot,t)\|_{L^\infty(\Om)}=0
	\qquad \mbox{and} \qquad
	{\rm{ess}} \lim_{\hs{-4mm} t\to\infty} \|\Theta(\cdot,t)-\Tinf\|_{L^\infty(\Om)}=0
\eas
with some $\Tinf>0$.\abs
\noindent {\bf Key words:} nonlinear acoustics; thermoelasticity; rough data; large time behavior\\
{\bf MSC 2020:} 35B40 (primary); 74F05, 35L05, 35D30 (secondary)
\end{abstract}
%
%
%
%
%
%
%
\newpage
\section{Introduction}\label{intro}
This manuscript is concerned with the initial-boundary value problem
\be{0}
	\lball
	u_{tt} = u_{xx} - \big(f(\Theta)\big)_x,
	\qquad & x\in\Om, \ t>0, \\[1mm]
	\Theta_t = \Theta_{xx} - f(\Theta) u_{xt},
	\qquad & x\in\Om, \ t>0, \\[1mm]
	u=0, \quad \Theta_x=0,
	\qquad & x\in\pO, \ t>0, \\[1mm]
	u(x,0)=u_0(x), \quad u_t(x,0)=u_{0t}(x), \quad \Theta(x,0)=\Theta_0(x),
	\qquad & x\in\Om,
	\ear
\ee
which is used to describe the spatio-temporal evolution of the deformation variable $u$ and the temperature $\Theta$ 
during conversion of mechanical energy into heat in one-dimensional thermoelastic materials.
In fact, in the case when $f\equiv id$ this system arises as a minimal representative among 
a comprehensive class of more general models for nonlinear thermoelasticity, specifically postulating the simple law
${\mathcal{E}}=\frac{1}{2} u_x^2 - \Theta u_x - \Theta \ln \Theta + \Theta$ 
for the free energy in a homogeneous one-dimensional body (\cite{hrusa_tarabek});
for more general functions $f$, (\ref{0}) can be viewed as a hyperbolic-parabolic approximation of a 
corresponding model associated with the choice 
${\mathcal{E}}=\frac{1}{2} u_x^2 - f(\Theta) u_x - \Theta \ln \Theta + \Theta$ within certain small-strain regimes
(\cite{hrusa_tarabek}, \cite{win_existence}).\abs
The mathematical analysis of (\ref{0}) and related systems for thermoelastic interaction has flourished already in the 1980s,
where papers by Slemrod, Dafermos and Hsiao from the 1980s (\cite{slemrod}, \cite{dafermos}, \cite{dafermos_hsiao_smooth})
have formed starting points for a series of works that have addressed issues of solvability and solution behavior
in various particular scenarios.
Common to a considerable part of these developments concerned with one-dimensional problems 
seems a restriction either to local-in-time solutions (\cite{racke88}),
or to the dynamics near constant states (see \cite{slemrod}, \cite{hrusa_tarabek}, 
\cite{racke_shibata}, \cite{racke_shibata_zheng}, \cite{kim} and \cite{jiang1990}, for instance);
in higher-dimensional domains, 
an apparently quite substantial limitation of available information on regularity 
goes along with a predominant concentration of corresponding literature on aspects of solvability
in appropriately generalized frameworks 
(\cite{roubicek}, \cite{mielke_roubicek}, \cite{owczarek_wielgos}, \cite{cieslak_muha_trifunovic},
\cite{roubicek_SIMA10}, \cite{blanchard_guibe}, \cite{gawinecki_zajaczkowski_CPAA}).
Even in the presence of additional dissipative mechanisms due to viscoelastic effects, 
a comprehensive understanding of qualitative behavior in situations far off equilibrium
seems to have been achieved only in one-dimensional settings (\cite{racke_zheng}, \cite{hsiao_luo}).\abs
{\bf Solutions emanating from large regular data.} \quad
For the purely thermoelastic problem (\ref{0}) with $f\equiv id$, 
only recently some substantial progress with regard to large-data solutions
could be achieved: As seen in \cite{cieslak_SIMA} and \cite{cieslak_MAAN}, namely, for initial data satisfying
\be{ireg}
	u_0\in W^{2,2}(\Om) \cap W_0^{1,2}(\Om),
	\qquad 
	u_{0t} \in W_0^{1,2}(\Om)
	\qquad \mbox{and} \qquad
	\Theta_0\in W^{1,2}(\Om)
	\mbox{ with $\Theta_0>0$ in } \bom,
\ee
a theory not only of global solvability but also of large time asymptotics can be built on a quasi-Lyapunov
inequality resulting from the identity
\be{en2}
	\frac{d}{dt} \bigg\{ \io \frac{\Theta_x^2}{\Theta} + \io u_{xt}^2 + \io u_{xx}^2 \bigg\}
	+ 2 \io \Theta (\ln\Theta)_{xx}^2 
	= \io \frac{\Theta_x^2}{\Theta} u_{xt}.
\ee
In particular, accordingly implied information on regularity of solutions, and especially on favorable
compactness properties of trajectories, has led the authors in \cite{cieslak_SIMA} and \cite{cieslak_MAAN} to the conclusion that
if the initial data comply with the hypotheses on regularity and positivity made in (\ref{ireg}), then
within some suitable framework of solvability a unique global solution indeed exists and satisfies
\be{conv}
	u(\cdot,t) \to 0
	\quad \mbox{in } W^{1,2}(\Om),
	\qquad
	u_t(\cdot,t) \to 0
	\quad \mbox{in } L^2(\Om),
	\qquad \mbox{and} \qquad
	\Theta(\cdot,t) \to \Theta_\infty
	\quad \mbox{in } L^2(\Om)
\ee
as $t\to\infty$, where 
$\Theta_\infty=\frac{1}{|\Om|} \cdot \big\{ \frac{1}{2} \io u_{0t}^2 + \frac{1}{2} \io u_{0x}^2 + \io \Theta_0 \big\}$.
These results extend classical knowledge on similar relaxation properties of weak solutions 
to linearized relatives, available actually also in higher-dimensional domains (\cite{koch}),
and on asymptotic stability of semitrivial states, even in stronger topological frameworks and for classes
of one-dimensional problems 
considerably more general than (\ref{0}) (\cite{racke_shibata}, \cite{racke_shibata_zheng}, \cite{hrusa_tarabek}).
By covering initial data of arbitrary size within the spaces of functions fulfilling (\ref{ireg}), 
this furthermore indicates a substantial trend toward equilibrium in (\ref{0}), also of trajectories starting from far apart,
and thereby draws a parallel to the well-understood situation in variants of (\ref{0}) that additionally account for
visoelastic damping
(\cite{racke_zheng}).\abs
{\bf Main results: Relaxation within energy- and entropy-maximal ranges of initial data.} \quad
From a physical perspective, two principles apparently more fundamental than that underlying (\ref{en2})
seem encrypted in the identities
\be{energy}
	\frac{d}{dt} \bigg\{ \frac{1}{2} \io u_t^2 + \frac{1}{2} \io u_x^2 + \io \Theta \bigg\}
	= 0
\ee
and
\be{entropy}
	\frac{d}{dt} \io \ell(\Theta) = - \io \frac{f'(\Theta)}{f^2(\Theta)} \Theta_x^2,
	\qquad \ell(\xi):=- \int_1^\xi \frac{d\sig}{f(\sig)}, \ \xi>0,
\ee
as formally associated with (\ref{0}) for arbitrary increasing positive functions $f$, 
These relations quantify the conservation of total energy and the nondecrease of entropy, respectively, and accordingly
it seems natural to consider (\ref{0}) within ranges of initial data which are maximal in this regard,
that is, to require that, only, 
\be{i99}
	u_0\in W_0^{1,2}(\Om),
	\qquad 
	u_{0t} \in L^2(\Om)
	\qquad \mbox{and} \qquad
	\Theta_0\in L^1(\Om)
	\mbox{ is such that $\Theta_0\ge 0$ a.e.~in } \Om.
\ee
It is, inter alia, a lack of smoothing in the hyperbolic subsystem of (\ref{0}) which firstly
gives rise to the evident methodological observation that a corresponding analysis 
can apparently no longer be based on (\ref{en2}) in such extended scenarios.
Secondly, aiming at a more application-oriented point of view
this moreover raises the question whether
discontinuities or further types of irregularities in the initial distributions may affect the asymptotic prevalence of
spatial homogeneity, despite the circumstance that
the deformation variable $u$ cannot be expected to undergo smoothing within finite time intervals;
classical findings on lack of regularity and even on blow-up phenomena in thermoelastic problems 
(\cite{dafermos_hsiao_BU}, \cite{racke_bu}, \cite{hrusa_messaoudi}) may serve as a caveat in this regard.\abs
The intention of the present manuscript consists in developing an approach capable of ruling out any such effect,
under the mere assumptions in (\ref{i99}) asserting large time decay of $u$ and stabilization of $\Theta$ toward 
a homogeneous limit state.
As a prerequisite, our considerations will rest on the following statement on global solvability in (\ref{0}),
as recently obtained in \cite{win_existence}.\Abs
{\bf Theorem A} \quad
{\it
  Let $\Om\subset\R$ be an open bounded interval, suppose that
  \be{f}
	f\in C^1([0,\infty))
	\mbox{ is such that $f(0)=0$ and $0 < f'(\xi) \le \Kf$ for all } \xi\ge 0
  \ee
  with some $\Kf>0$, and assume that
  \be{init}
	u_0\in W_0^{1,2}(\Om),
	\quad 
	u_{0t} \in L^2(\Om)
	\quad \mbox{and} \quad
	\Theta_0\in L^1(\Om)
	\mbox{ is nonnegative with $\Theta_0\not\equiv 0$.}
  \ee
  Then there exists at least one global weak solution $(u,\Theta)$ of (\ref{0}), in the sense of Definition \ref{dw} below, 
  which is such that actually
  \be{17.1}
	\lbal
	u \in C^0(\bom\times [0,\infty)) \cap L^\infty((0,\infty);W_0^{1,2}(\Om))
	\quad \mbox{with} \quad
	u_t \in L^\infty((0,\infty);L^2(\Om))
	\qquad \mbox{as well as} \\[1mm]
	\Theta \in C^0_w([0,\infty);L^1(\Om)) \cap \bigcap_{q\in [1,2)} L^q_{loc}([0,\infty);L^\infty(\Om)) \\
	\hs{20mm}
	\cap \bigcap_{q\in [1,\frac{4}{3})} L^q_{loc}([0,\infty);W^{1,2}(\Om)) \cap L^2_{loc}((0,\infty);W^{1,2}(\Om)),
	\ear
  \ee
  and that for
  all $\tau>0$ there exists $C(\tau)>0$ fulfilling
}
  \be{17.6}
	\frac{1}{C(\tau)} \le \Theta(x,t) \le C(\tau)
	\qquad \mbox{for a.e.~} (x,t)\in \Om\times (\tau,\infty).
  \ee
%
%
%
%
%
In addressing the question of how far the large time behavior in (\ref{0}) can be described in such settings,
we note that
the above reduction of requirements appears to bring about a considerable loss of compactness properties along trajectories:
The regularity information in (\ref{17.1}) and (\ref{17.6}) falls quite far short of temporally uniform
bounds for $(u,u_t,\Theta)$ in $W^{2,2}(\Om)\cap W^{1,2}(\Om)\times W^{1,2}(\Om)$ which result from (\ref{en2}) when $f\equiv id$
and (\ref{ireg}) is assumed (\cite{cieslak_MAAN}),
and due to the hyperbolic character of the mechanical part in (\ref{0})
it is to be expected that this limitation of knowledge, especially with regard to the solution component $u$,
is not only of technical nature but may rather reflect core properties of the problem itself.\abs
Despite these premises, our main results now will reveal that also under the minimal assumptions in (\ref{init})
a solution can always be found which indeed stabilizes toward homogeneity in the announced style,
thus particularly confirming that also solutions which remain rather irregular during evolution
may become smooth at least in the large time limit;
we emphasize that the following statement in this regard refers to $L^\infty$ topologies in both solution components
and hence particularly asserts spatially uniform convergence in the temperature variable, although Theorem A
does not even predict continuity of $\Theta$.
\begin{theo}\label{theo18}
  Suppose that $\Om\subset\R$ is an open bounded interval, that (\ref{f}) is satisfied with some $\Kf>0$, and that (\ref{init})
  holds. 
  Then there exists a global weak solution of (\ref{0}) which has the additional properties that (\ref{17.1}) holds,
  and that with some $\Tinf>0$ and some null set $N\subset (0,\infty)$,
  \be{18.1}
	u(\cdot,t) \to 0
	\quad \mbox{in } L^\infty(\Om)
	\qquad \mbox{as } t\to\infty,
  \ee
  and that
  \be{18.2}
	\Theta(\cdot,t) \to \Tinf
	\quad \mbox{in } L^\infty(\Om)
	\qquad \mbox{as } (0,\infty)\sm N \ni t\to\infty.
  \ee
\end{theo}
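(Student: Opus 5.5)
The plan is to work with approximate solutions $(\ueps,\Teps)$ of regularized problems, as in the existence theory behind Theorem A, and to derive $\eps$-independent estimates that are also uniform in time. We then pass to the limit along a subsequence and prove decay for the limit itself. Two functionals will be central. The first is the conserved energy from (\ref{energy}). The second is the entropy functional $\io \ell(\Teps)$ from (\ref{entropy}), whose time integrals yield
\[
\int_0^\infty \io \frac{f'(\Teps)}{f^2(\Teps)}\Tepsx^2 \le C.
\]
This bound is uniform in $\eps$. The lower bound in (\ref{17.6}), established for $t\ge\tau$ in a way that should also be uniform in $\eps$, turns $\ell(\Teps)$ into a quantity bounded from above, since $\ell$ is decreasing and $f'>0$.

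Combining this with upper bounds for $\Teps$ on $(\tau,\infty)$ and with $f'\ge c>0$ on compact subsets of $(0,\infty)$, we would get
\[
\int_\tau^\infty \io \Tepsx^2 \le C .
\]
Setting $\Tinf:=\frac{1}{|\Om|}\{\frac12\io u_{0t}^2+\frac12\io u_{0x}^2+\io\Theta_0\}$, we would then aim to show that the mechanical energy $E(t)=\frac12\io u_t^2+\frac12\io u_x^2$ tends to zero. Energy conservation gives $\io\Theta(\cdot,t)=|\Om|\Tinf-E(t)$. In one dimension, $\|\Theta-\ov{\Theta}\|_{L^\infty}\le C\|\Theta_x\|_{L^2}$, so the integrability of $\io\Theta_x^2$ shows that $\Theta$ is close to its spatial mean in $L^\infty$ for most times.

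Consider next the hyperbolic part $u_{tt}=u_{xx}-(f(\Theta))_x$. Testing against $u$ and against a Bogovskii-type antiderivative of $u_t$ gives the identity
\[
\frac{d}{dt}\io u u_t = \io u_t^2-\io u_x^2+\io f(\Theta)u_x .
\]
The forcing term satisfies $\io f(\Theta)u_x=\io (f(\Theta)-f(\ov\Theta))u_x$, since $u$ vanishes at $\pO$, and is therefore controlled by $\|\Theta_x\|_{L^2}$. This identity alone only shows that the Dirichlet and kinetic parts equilibrate on average. Dissipation of $E$ must come through the coupling: by (\ref{energy}), $E'=-\frac{d}{dt}\io\Theta$, and $\frac{d}{dt}\io\Theta=-\io f(\Theta)u_{xt}$. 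The key step is to show that $E(t)\to 0$.

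I would try a modified entropy argument. The strict concavity of $-\ell$ together with fixed total energy means that entropy is maximized at the state $u\equiv 0$, $\Theta\equiv\Tinf$. The entropy $\io\ell(\Theta)$ is monotone and bounded, hence has a limit. If $E(t_k)\ge\delta>0$ along a sequence $t_k\to\infty$, then $\io\Theta$ stays below $|\Om|\Tinf-\delta$ there. Smallness of $\int_{t_k}^{t_k+1}\io\Theta_x^2$ makes $\Theta$ nearly constant, so that $u_{tt}-u_{xx}\approx -f'(\ov\Theta)\ov\Theta_x$-type small forcing. The equation then behaves like a free wave equation on $(t_k,t_k+T)$ driven by nearly constant $f(\Theta)$. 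In the temperature equation, the term $f(\Theta)u_{xt}\approx f(\ov\Theta)u_{xt}$ has zero mean, yet multiplying by $\Theta$ exploits the strong non-smoothness in a way that forces $\Theta_x$ of order $\|u_t\|$ unless $u_t$ is small. Concretely, testing the $\Theta$ equation against $\Theta-\ov\Theta$ and using a time-shifted compactness argument (limits $\Theta(\cdot+t_k)\to$ constant, $u(\cdot+t_k)\to$ a free wave $w$ with $f(c)w_{xt}$ required to vanish weakly), we would conclude $w_{xt}=0$, hence $w\equiv 0$ via the Dirichlet condition. This step is the main obstacle. Weak convergence of $u$ only yields $w=0$ and not strong decay of $E$, so I would additionally use the equipartition identity above together with lower semicontinuity and conservation to upgrade it to $E(t_k)\to 0$.

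Once $E\to0$ is established, $\|u\|_{L^\infty}\le C\|u_x\|_{L^2}\to0$ gives (\ref{18.1}). Also $\ov\Theta\to\Tinf$. To obtain (\ref{18.2}), we need pointwise-in-time control of $\|\Theta_x(\cdot,t)\|_{L^2}$, not merely time-integrated control. For this I would use a decay estimate for $\io\Theta_x^2/\Theta$ or for the entropy gap $\io(\ell(\Theta)-\ell(\ov\Theta))$, valid for a.e. large $t$; this is where the exceptional null set $N$ arises.
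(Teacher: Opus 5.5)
The entropy step, which gives $\int_\tau^\infty\io\Theta_x^2<\infty$ from two-sided bounds, matches Lemma \ref{lem67}. After that, you make (\ref{18.1}) and the choice of $\Tinf$ depend on the mechanical energy $E(t)=\frac12\io u_t^2+\frac12\io u_x^2$ tending to zero. Your sketch does not prove this, and the theorem does not assert it.

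Once the time-shift compactness argument gives $w\equiv0$, your proposed upgrade to $E(t_k)\to0$ fails:
\begin{itemize}
\item Lower semicontinuity only yields $E[w]\le\liminf_k E(t_k)$, which says nothing when $w=0$.
\item The identity for $\frac{d}{dt}\io uu_t$ only gives equipartition, $\int_{t_k}^{t_k+T}(\|u_t\|_{L^2(\Om)}^2-\|u_x\|_{L^2(\Om)}^2)\,dt\to0$, not decay.
\item Conservation merely ties $E$ to $\io\Theta$ without determining $\lim\io\Theta$. Moreover, (\ref{energy}) is not known to hold exactly for this weak solution, since (\ref{0eps}) dissipates energy at rate $\eps\io(\vepsxx^2+\uepsxx^2)$.
\end{itemize}
High-frequency mechanical oscillations that converge weakly to zero are exactly what the lack of compactness allows. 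In the paper, $\Tinf$ is fixed by the entropy, $|\Om|\,\ell(\Tinf)=\lim_{t\to\infty}\io\ell(\Theta(\cdot,t))$ (Lemma \ref{lem67}), not by the energy average.

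You do not actually need $E\to0$, because your limit identification is sound. Pass to the limit in (\ref{wt}) along the shifts, using that $\Theta(\cdot,t_k+\cdot)\to\Tinf$ strongly and $\Theta_x(\cdot,t_k+\cdot)\to0$ in $L^2$ by (\ref{67.1}) and (\ref{67.111}). This gives $f(\Tinf)\int_0^T\io w_t\vp_x=0$ for all $\vp\in C_0^\infty(\bom\times(0,T))$. These test functions need not vanish on $\pO$, so $w_t\equiv0$, and then (\ref{wu}) forces $w\equiv0$. Combine this with the precompactness of $\{u(\cdot,t)\}_{t>0}$ in $C^0(\bom)$ and $\|u(\cdot,t_k+s)-u(\cdot,t_k)\|_{L^2(\Om)}\le Cs$: every $\omega$-limit of $u$ vanishes, which is (\ref{18.1}). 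This is a valid and more direct alternative to Lemma \ref{lem43}. The paper instead uses $\pa_t(z-u_x)=z_{xx}-f'(\Theta)z_x^2$ at the approximate level, which is why it needs the strong convergence of Lemma \ref{lem65}.

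The genuine gap is (\ref{18.2}). Time-integrated control of $\io\Theta_x^2$ only yields (\ref{67.1}), i.e.\ $L^\infty$-closeness to $\Tinf$ for most times. Neither functional you propose upgrades this to every $t\notin N$:
\begin{itemize}
\item Already for $f\equiv id$, the time derivative of $\io\frac{\Theta_x^2}{\Theta}$ contains $\io\Theta_x u_{xxt}$ and $\io\frac{\Theta_x^2}{\Theta}u_{xt}$. The first is cancelled in (\ref{en2}) only by adding $\io u_{xt}^2+\io u_{xx}^2$, and both require $u_{xt}\in L^2$. None of this is meaningful for $u_0\in W_0^{1,2}(\Om)$, $u_{0t}\in L^2(\Om)$.
\item The entropy gap controls only $L^2$-type deviations at a fixed time. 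These do not bound $\|\Theta(\cdot,t)-\Tinf\|_{L^\infty(\Om)}$ without pointwise-in-time control of $\|\Theta_x(\cdot,t)\|_{L^2(\Om)}$, and no such control is available.
\end{itemize}
The missing idea is an $L^\infty$ propagation-of-smallness estimate that needs no bound on $\Theta_x$ at individual times. Lemma \ref{lem61} runs a Moser iteration on $\io(\Teps-a)^{2^k}$. The source $-\feps(\Teps)\vepsx$ is integrated by parts, so only the energy bound on $\io\veps^2$ enters. This gives, uniformly in $\eps$,
$\|\Teps(\cdot,t)-a\|_{L^\infty(\Om)}\le C\|\Teps(\cdot,\ts)-a\|_{L^\infty(\Om)}+C(t-\ts)^b$ for $t\in(\ts,\ts+\frac12)$, whenever the deviation at $\ts$ is small. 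Choose such a $\ts\in(t_k-\tau,t_k)$ via (\ref{67.1}) and pass to the limit with (\ref{51.87}); this gives (\ref{18.2}) (Lemma \ref{lem68}). The null set $N$ is simply the exceptional set in (\ref{51.87}), not the output of an a.e.-in-time decay estimate.
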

{\bf Challenges and ideas.} \quad
In line with the circumstance that the driving inhomogeneity $-\big(f(\Theta)\big)_x$ in the first equation in (\ref{0}) 
can in general not be expected to be favorably signed, a key toward an expedient analysis of large time relaxation 
seems to consist in an appropriate understanding of the dissipative action of diffusion in the temperature part.
Indeed, a quantifiable effect of this mechanism at a spatially global level is already expressed in (\ref{entropy}),
and a first step in our analysis will be devoted to turning the information on monotonicity of 
\be{01}
	0<t\mapsto \io z(\cdot,t),
	\qquad 
	z:=\ell(\Theta),
\ee
and on averaged decay of $z_x$ in the flavor of the inequality
\be{02}
	\int_1^\infty \io z_x^2 < \infty,
\ee
as thereby implied, into a statement on genuine stabilization of $\Theta$ according to (\ref{18.2}).
The preliminary Lemma \ref{lem67} in this regard will combine (\ref{02}) and a Poincar\'e-Sobolev inequality 
associated with the continuous embedding $W^{1,2}(\Om)\hra L^\infty(\Om)$ in a straightforward manner
to see that with $\Tinf>0$ uniquely determined by the relation $|\Om| \cdot \ell(\Tinf)=\lim_{t\to\infty} \io z(\cdot,t)$,
\be{03}
	\int_{t-1}^t \|\Theta(\cdot,s)-\Tinf\|_{L^\infty(\Om)} ds \to 0
	\qquad \mbox{as } t\to\infty.
\ee
In the rough-data setting concentrated on here, however, 
knowledge on regularity of $\Theta$ seems essentially limited to the $L^\infty$ bounds recorded in Theorem A,
and to thus remain insufficient for developing (\ref{03}) into the uniform convergence property in (\ref{18.2}) 
standard compactness arguments based on, e.g., the Arzel\`a-Ascoli theorem;
accordingly, the main step toward our derivation of (\ref{18.2}) will operate at regularity levels below continuity
and use a Moser-type recursion to supplement (\ref{03}) by an inequality of the form
\be{04}
	\|\Theta(\cdot,t)-\Tinf\|_{L^\infty(\Om)}
	\le C \|\Theta(\cdot,\ts)-\Tinf\|_{L^\infty(\Om)}
	+ C \cdot (t-\ts)^b,
	\qquad \mbox{for all $t\in (\ts,\ts+\frac{1}{2})$,}
\ee
which at the level of solutions to certain regularized variants of (\ref{0}) (see (\ref{0eps})) holds with some $b>0$ and $C>0$
whenever $\ts>1$ is such that $\|\Theta(\cdot,\ts)-\Tinf\|_{L^\infty(\Om)} \le 2^{-b}$ (Lemma \ref{lem61}).\abs
The second part of our analysis will thereafter be guided by the motivation to use the result on uniform stabilization of $\Theta$
thereby achieved to extract from a suitable approximate counterpart of the identity
\be{05}
	\pa_t (z-u_x) = z_{xx} - f'(\Theta) z_x^2
\ee
some information on large time decay of $u_x$. In fact, relying on the decay property of $z_x$ expressed in (\ref{02}), 
as well as on the fact that (\ref{18.2}) warrants uniform stabilization also of $z$, Lemma \ref{lem43}
will at its core argue toward transforming (\ref{05}) into a decay property of the form
\bas
	\int_{t_k}^{t_k+1} \|u_x(\cdot,t)-u_x(\cdot,t_k)\|_{(W^{1,2}(\Om)^\star}^2 dt \to 0
	\qquad \mbox{as } k\to\infty,
\eas
hence implying that
\bas
	\int_{t_k}^{t_k+1} \|u(\cdot,t)-u(\cdot,t_k)\|_{L^2(\Om)}^2 dt \to 0
	\qquad \mbox{as } k\to\infty,
\eas
whenever $(t_k)_{k\in\N}\subset (0,\infty)$ satisfies $t_k\to\infty$ as $k\to\infty$.
Utilizing this in the course of a suitable testing procedure applied to the first equation in (\ref{0}) will finally
identify all conceivable $\omega$-limits of $u$ as necessarily trivial.
\mysection{Preliminary results known from the literature}
In this section we briefly recall from \cite{win_existence} the solution concept referred to in Theorem A,
as well as some basic elements of the construction underlying the existence statement therein.\abs
Let us begin by importing from \cite{win_existence} the 
following basic notion of weak solvability which appears quite natural in the context of (\ref{0}).
\begin{defi}\label{dw}
  Let $\Om\subset\R$ be a bounded open interval, let $f\in C^0([0,\infty))$, and let
  $u_0\in L^1(\Om)$, $u_{0t}\in L^1(\Om)$ and $\Theta_0\in L^1(\Om)$.
  Then a pair $(u,\Theta)$ of functions
  \be{w1}
	\lbal
	u\in C^0([0,\infty);L^1(\Om)) \cap L^1_{loc}([0,\infty);W_0^{1,1}(\Om))
	\qquad \mbox{and} \\[1mm]
	\Theta\in L^1_{loc}([0,\infty);W^{1,1}(\Om))
	\ear
  \ee
  will be called a {\em global weak solution} of (\ref{0}) 
  if $\Theta\ge 0$ a.e.~in $\Om\times (0,\infty)$, 
  if $u_t, f'(\Theta) \Theta_x, f'(\Theta) \Theta_x u_t$ and $f(\Theta) u_t$ all belong to $L^1_{loc}(\bom\times [0,\infty))$,
  and if
  \bea{wu}
	\int_0^\infty \io u \vp_{tt}
	- \io u_{0t} \vp(\cdot,0)
	+ \io u_0 \vp_t(\cdot,0)
	= -  \int_0^\infty \io u_x \vp_x
	- \int_0^\infty \io f'(\Theta) \Theta_x \vp
  \eea
  for all $\vp\in C_0^\infty(\Om\times [0,\infty))$, as well as
  \bea{wt}
	- \int_0^\infty \io \Theta \vp_t - \io \Theta_0 \vp(\cdot,0)
	= - \int_0^\infty \io \Theta_x \vp_x
	+ \int_0^\infty \io f'(\Theta) \Theta_x u_t \vp
	+ \int_0^\infty \io f(\Theta) u_t \vp_x
  \eea
  for each $\vp\in C_0^\infty(\bom\times [0,\infty))$.
\end{defi}
As seen in \cite{win_existence}, a weak solution of (\ref{0}) can be obtained 
by means of a limit procedure involving the approximate problems
\be{0eps}
	\lball
	\vepst = - \eps \vepsxxxx + \uepsxx - \big( \feps(\Teps)\big)_x,
	\qquad & x\in\Om, \ t>0, \\[1mm]
	\uepst = \eps \uepsxx + \veps,
	\qquad & x\in\Om, \ t>0, \\[1mm]
	\Tepst = \Tepsxx - \feps(\Teps) \vepsx,
	\qquad & x\in\Om, \ t>0, \\[1mm]
	\veps=\vepsxx=0, \quad \ueps=0, \quad \Tepsx=0,
	\qquad & x\in\pO, \ t>0, \\[1mm]
	\veps(x,0)=v_{0\eps}(x), \quad \ueps(x,0)=u_{0\eps}(x), \quad \Teps(x,0)=\Theta_{0\eps}(x),
	\qquad & x\in\Om,
	\ear
\ee
where $(\feps)_{\eps\in (0,1)} \subset C^2([0,\infty))$ is such that
\be{fe1}
	0 \le \feps(\xi) \le 2\Kf \xi
	\quad \mbox{and} \quad
	0<\feps'(\xi) \le 2\Kf 
	\qquad \mbox{for all $\xi\ge 0$ and } \eps\in (0,1),
\ee
and that
\be{fe2}
	\feps \to f 
	\quad \mbox{in } C^1_{loc}([0,\infty))
	\qquad \mbox{as } \eps\searrow 0,
\ee
and where
$(v_{0\eps})_{\eps\in (0,1)} \subset C_0^\infty(\Om)$,
$(u_{0\eps})_{\eps\in (0,1)} \subset C_0^\infty(\Om)$
and $(\Theta_{0\eps})_{\eps\in (0,1)} \subset C^\infty(\bom)$
are such that for each $\eps\in (0,1)$ we have $\Theta_{0\eps}> 0$ in $\bom$ and
\be{ie1}
	\io \Theta_{0\eps} \ge \frac{1}{2} \io \Theta_0,
\ee
and that 
\be{ie}
	v_{0\eps} \to u_{0t}
	\quad \mbox{in } L^2(\Om),
	\qquad 
	u_{0\eps} \to u_0
	\quad \mbox{in } W^{1,2}(\Om)
	\qquad \mbox{and} \qquad
	\Theta_{0\eps} \to \Theta_0
	\quad \mbox{in } L^1(\Om)
\ee
as $\eps\searrow 0$. 
As has been seen in \cite[Lemmata 2.1 and 2.4]{win_existence}, all these problems indeed are globally solvable in the classical sense, 
and their solutions do not only satisfy the basic estimates (\ref{2.1})-(\ref{2.5}) that result from a corresponding
approximate relative of the energy identity (\ref{energy}), but moreover enjoy some further regularity features, 
among which we recall here the pointwise upper and lower bounds in (\ref{36.1}) and (\ref{36.2}) for explicit later reference.
\begin{lem}\label{lem0}
  For each $\eps\in (0,1)$, the problem (\ref{0eps}) admits a global classical solution $(\veps,\ueps,\Teps)$ with
  \bas
	\lbal
	\veps\in C^{4,1}([\bom\times [0,\infty)), \\
	\ueps\in C^{2,1}(\bom\times [0,\infty))
	\qquad \mbox{and} \\
	\Teps \in C^{2,1}(\bom\times [0,\infty)),
	\ear
  \eas
  such that there exists $C>0$ such that 
  \be{2.1}
	\io \veps^2(\cdot,t) \le C
	\qquad \mbox{for all $t>0$ and } \eps\in (0,1)
  \ee
  and
  \be{2.2}
	\io \uepsx^2(\cdot,t) \le C
	\qquad \mbox{for all $t>0$ and } \eps\in (0,1)
  \ee
  as well as
  \be{2.3}
	\io \Teps(\cdot,t) \le C
	\qquad \mbox{for all $t>0$ and } \eps\in (0,1),
  \ee
  and that
  \be{2.4}
	\eps \int_0^\infty \io \vepsxx^2 \le C
	\qquad \mbox{for all } \eps\in (0,1)
  \ee
  and
  \be{2.5}
	\eps \int_0^\infty \io \uepsxx^2 \le C
	\qquad \mbox{for all } \eps\in (0,1).
  \ee
  Given any $\tau>0$, we moreover have
  \be{36.1}
	\sup_{\eps\in (0,1)} \sup_{(x,t)\in \Om\times (\tau,\infty)} \Teps(x,t) < \infty,
  \ee
  and we can find $\epss=\epss(\tau)\in (0,1)$ such that
  \be{36.2}
	\inf_{\eps\in (0,\epss)} \inf_{(x,t)\in \Om\times (\tau,\infty)} \Teps(x,t) >0.
  \ee
\end{lem}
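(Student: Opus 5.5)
The plan is to treat (\ref{0eps}), for fixed $\eps\in (0,1)$, as a parabolic system: fourth order in $\veps$ and second order in $\ueps$ and $\Teps$. First I would obtain a local classical solution by a standard Banach fixed point argument in a space of Hölder continuous functions, using analytic semigroup theory for $\eps\pa_x^4$ with boundary conditions $\veps=\vepsxx=0$, for $-\eps\pa_x^2$ with Dirichlet conditions, and for $-\pa_x^2$ with Neumann conditions. Parabolic Schauder theory then upgrades this to the stated $C^{4,1}$ and $C^{2,1}$ regularity. Positivity of $\Teps$ follows from the comparison principle, because $\Theta_{0\eps}>0$ and the reaction term $-\feps(\Teps)\vepsx$ vanishes at $\Teps=0$ thanks to (\ref{fe1}). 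Global extensibility will follow once $\|\Teps\|_{L^\infty}$ and suitable norms of $(\veps,\ueps)$ are shown to stay bounded on finite time intervals. For fixed $\eps$ this is obtained from the estimates below together with the smoothing of the fourth-order operator.

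The core estimates (\ref{2.1})--(\ref{2.5}) come from an approximate version of (\ref{energy}). I would test the first equation by $\veps$, differentiate $\frac12\io\uepsx^2$ using the second equation, and integrate the third equation. The boundary conditions give $-\eps\io\vepsxxxx\veps=-\eps\io\vepsxx^2$ and $-\io(\feps(\Teps))_x\veps=\io\feps(\Teps)\vepsx$. Also $\frac{d}{dt}\frac12\io\uepsx^2=-\eps\io\uepsxx^2-\io\uepsxx\veps$ and $\frac{d}{dt}\io\Teps=-\io\feps(\Teps)\vepsx$. All mixed terms cancel, so that
\bas
	\frac{d}{dt}\Big\{\frac12\io\veps^2+\frac12\io\uepsx^2+\io\Teps\Big\}+\eps\io\vepsxx^2+\eps\io\uepsxx^2=0.
\eas
By (\ref{ie}) the initial energy is bounded independently of $\eps$, and $\Teps>0$. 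Integrating in time therefore yields all of (\ref{2.1})--(\ref{2.5}) at once.

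For the $\eps$-independent upper bound (\ref{36.1}) I would test the third equation by $\Teps^{p-1}$ and integrate by parts:
\bas
	-\io\feps(\Teps)\vepsx\Teps^{p-1}=\io\veps\big(\feps(\Teps)\Teps^{p-1}\big)_x .
\eas
By (\ref{fe1}), $|(\feps(\Teps)\Teps^{p-1})_x|\le Cp\,\Teps^{p-1}|\Tepsx|$. Combined with (\ref{2.1}) and Young's inequality, this term is absorbed into the dissipation $(p-1)\io\Teps^{p-2}\Tepsx^2$, up to a remainder $C p\io\Teps^{p}$. That remainder is controlled by the one-dimensional Gagliardo--Nirenberg inequality applied to $\Teps^{p/2}$, interpolating with the $L^1$ bound (\ref{2.3}). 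This gives $L^p$ bounds for $t\ge\tau$ with constants independent of $\eps$, where the $L^1$-to-$L^p$ smoothing near $t=\tau$ is handled by a time cutoff. A Moser--Alikakos iteration in $p=2^k$, with careful tracking of the $p$-dependence, then produces the $L^\infty$ bound on $\Om\times(\tau,\infty)$.

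The lower bound (\ref{36.2}) is what I expect to be the main obstacle. I would work with $\zeps:=\elleps(\Teps)$, where $\elleps(\xi):=-\int_1^\xi\frac{d\sig}{\feps(\sig)}$, which satisfies $\zepst=\zepsxx-\feps'(\Teps)\zepsx^2+\vepsx$. Since $\veps=\uepst-\eps\uepsxx$, the function $w:=\zeps-\uepsx$ satisfies
\bas
	w_t\le w_{xx}+(\eps-1)\uepsxxx,
\eas
so an upper bound for $w$, and hence for $\zeps$ modulo $\uepsx$, should follow from a comparison or Moser-type argument for this subsolution inequality. The mismatch term $(\eps-1)\uepsxxx$ and the lack of $L^\infty$ control on $\uepsx$ are the delicate points. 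I would first try an $L^p$ iteration on $(w-k)_+$ that uses only the $L^2$ bound (\ref{2.2}) for $\uepsx$ and (\ref{2.5}) to control the perturbation; this is presumably why the estimate only holds for $\eps<\epss(\tau)$. An upper bound for $\zeps$ yields a positive lower bound for $\Teps$, because $\elleps(\xi)\to+\infty$ as $\xi\searrow0$ by (\ref{fe1}).
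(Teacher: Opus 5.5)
Your energy identity is correct. It is exactly the approximate version of (\ref{energy}) from which, according to the paper, (\ref{2.1})--(\ref{2.5}) result; the paper does not reprove this lemma but imports it from \cite{win_existence}. Positivity of $\Teps$ is also fine. Your scheme for (\ref{36.1}) is viable, but it contains a slip. Since $\veps$ is only bounded in $L^2(\Om)$, Young's inequality leaves the remainder $Cp\io\veps^2\Teps^p\le Cp\|\veps\|_{L^2(\Om)}^2\|\Teps^{p/2}\|_{L^\infty(\Om)}^2$, not $Cp\io\Teps^p$. This remainder is still absorbable via $\|\psi\|_{L^\infty(\Om)}\le C\|\psi_x\|_{L^2(\Om)}^{2/3}\|\psi\|_{L^1(\Om)}^{1/3}+C\|\psi\|_{L^1(\Om)}$, because the resulting power of $\|\psi_x\|_{L^2(\Om)}$ is $\frac43<2$. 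This is the mechanism used in the proof of Lemma \ref{lem61}.

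The genuine gap is in (\ref{36.2}), and the route through $w:=\zeps-\uepsx$ cannot work. (Incidentally, the correct equation is $w_t=w_{xx}+(1-\eps)\uepsxxx-\feps'(\Teps)\zepsx^2$.)

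\begin{itemize}
\item The perturbation $(1-\eps)\uepsxxx=(1-\eps)\pa_x\uepsxx$ is of order one. The only $\eps$-independent information on $\uepsxx$ is (\ref{2.5}), which allows $\|\uepsxx\|_{L^2(\Om\times(0,T))}$ to grow like $\eps^{-1/2}$.
\item Standard $L^\infty$ estimates for $w_t\le w_{xx}+\pa_x g$ require $g$ bounded in some $L^q$ with $q>3$, uniformly in $\eps$. That is out of reach, since the limit $u$ is merely in $W^{1,2}(\Om)$.
\item Even an upper bound for $w$ would give none for $\zeps=w+\uepsx$, because $\uepsx$ is controlled only in $L^2$.
\end{itemize}

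Substituting $\veps=\uepst-\eps\uepsxx$ in fact makes matters worse. In the $\zeps$-equation itself, the source $\vepsx$ is harmless: testing by $(\zeps)_+^{p-1}$ and integrating by parts (using $\veps=0$ on $\pO$) only produces $\io\veps^2(\zeps)_+^{p-2}$. That term is handled by (\ref{2.1}) and the Gagliardo--Nirenberg argument above.

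What is really missing is an $\eps$-independent starting point. $\Theta_0$ may vanish on a set of positive measure, so $\io\elleps(\Theta_{0\eps})$ is unbounded as $\eps\searrow0$. Nothing in your plan explains how positivity at times $t\ge\tau$ is generated from $\io\Theta_0>0$ alone. You need a uniform bound on $\io\zeps(\cdot,t_0)$ for some $t_0\in(0,\tau)$. Once you have it, the rest is straightforward:
\begin{itemize}
\item (\ref{5.1}) propagates the bound to all $t>t_0$;
\item $(\zeps)_-$ is easily controlled by (\ref{2.3}) or (\ref{36.1});
\item a Moser iteration as above gives $\zeps\le C$ on $\Om\times(\tau,\infty)$;
\item finally $\elleps(\xi)\ge\frac{1}{2\Kf}\ln\frac{1}{\xi}$ for $\xi\in(0,1)$ yields (\ref{36.2}).
\end{itemize}

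One possible way to get the bound at $t_0$ is as follows. Test the $\Teps$-equation by $\frac{1}{\Teps}$ and use $|\feps'(\xi)\xi-\feps(\xi)|\le 2\Kf\xi$; this gives $\frac{d}{dt}\io\ln\Teps\ge\frac12\io\frac{\Tepsx^2}{\Teps^2}-C$. Combine this with a lower bound for $\io\Teps$ on a short initial time interval (cf.\ (\ref{ie1})) and with $\|\psi-\ov{\psi}\|_{L^\infty(\Om)}\le C\|\psi_x\|_{L^2(\Om)}$. The result is a superlinear differential inequality for $-\io\ln\Teps$, whose solutions are bounded at positive times independently of their initial values. Converting this into a bound on $\io\zeps$ requires comparing $\elleps$ with $\ln$, hence $\feps'$ bounded below near $0$. (\ref{fe2}) provides that only for small $\eps$, which is a natural source of the restriction $\eps<\epss$, rather than the perturbation term you suggested.
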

In fact, in \cite[Lemmata 4.1, 7.3 and 8.1]{win_existence} these estimates have been found to imply the following
refined version of the statement in Theorem A. 
Here we particularly highlight the strong convergence feature in (\ref{51.7}) obtained in
\cite[Lemma 7.3]{win_existence}: While apparently going beyond what might naturally be expected to result from 
(\ref{2.1}), this property will be of crucial importance in our verification of the fact that
also the weak approximation feature in (\ref{51.9}) can be converted into a corresponding statement 
on strong convergence, to be accomplished in Lemma \ref{lem65}.
\begin{lem}\label{lem51}
  There exist $(\eps_j)_{j\in\N} \subset (0,1)$, a null set $N\subset (0,\infty)$ 
  and a global weak solution $(u,\Theta)$ of (\ref{0}) such that (\ref{17.1}) and (\ref{17.6}) hold,
  that $\eps_j\searrow 0$ as $j\to\infty$, and that the corresponding solutions of (\ref{0eps}) satisfy
  \begin{eqnarray}
	& & \ueps\to u
	\qquad \mbox{in } C^0_{loc}(\bom\times [0,\infty)),
	\label{51.5} \\
	& & \veps \to u_t
	\qquad \mbox{in } L^2_{loc}(\bom\times [0,\infty)),
	\label{51.7}, \\
	& & \Teps \to \Theta
	\qquad \mbox{a.e.~in $\Om\times (0,\infty)$},
	\label{51.8} \\
	& & \Teps(\cdot,t) \to \Theta(\cdot,t)
	\qquad \mbox{in $C^0(\bom)$ \quad for all $t\in (0,\infty)\sm N$}
	\qquad \qquad \mbox{and}
	\label{51.87} \\
	& & \Tepsx \wto \Theta_x
	\qquad \mbox{in } L^2_{loc}(\bom\times (0,\infty))
	\label{51.9} 
  \end{eqnarray}
  as $\eps=\eps_j\searrow 0$.
\end{lem}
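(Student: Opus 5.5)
The plan is to obtain Lemma \ref{lem51} by a compactness argument on the family $(\veps,\ueps,\Teps)_{\eps\in(0,1)}$ from Lemma \ref{lem0}, extracting a sequence $\eps=\eps_j\searrow 0$ along which all the listed convergences hold simultaneously, and then passing to the limit in (\ref{0eps}) to verify Definition \ref{dw}. The steps below are listed in the order I would carry them out.

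\textbf{Step 1: convergence of $\ueps$.} By (\ref{2.2}), $\ueps$ is bounded in $L^\infty((0,\infty);W_0^{1,2}(\Om))$, hence equicontinuous in $x$ with H\"older exponent $\frac12$. For time regularity I would use the second equation in (\ref{0eps}), $\uepst=\eps\uepsxx+\veps$, together with (\ref{2.1}) and (\ref{2.5}). This bounds $\uepst$ in $L^2_{loc}([0,\infty);L^2(\Om))$ uniformly in $\eps$, giving $\frac12$-H\"older continuity in $t$ with values in $L^2(\Om)$. Interpolating with the $W^{1,2}$ bound yields equicontinuity in $\bom\times[0,T]$. Arzel\`a--Ascoli and a diagonal argument then give (\ref{51.5}).

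\textbf{Step 2: convergence of $\Teps$.} I would work on $\Om\times(\tau,\infty)$, where (\ref{36.1}) and (\ref{36.2}) supply two-sided bounds. There I would test the third equation by $\Teps^{-1}$, or equivalently use the entropy-type functional $\int_\Om \ell(\Teps)$, to obtain $\int_\tau^T\int_\Om \Tepsx^2\le C(\tau,T)$; the cross term $\feps(\Teps)\vepsx/\Teps$ is handled via (\ref{fe1}) and an integration by parts onto $\Tepsx$ and $\veps$. This gives (\ref{51.9}) after extraction. A bound on $\Tepst$ in $L^1_{loc}((0,\infty);(W^{1,\infty}(\Om))^\star)$ follows from the equation, so Aubin--Lions yields strong $L^2_{loc}$ convergence and hence (\ref{51.8}). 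For the pointwise-in-time statement (\ref{51.87}), Fatou's lemma shows $\liminf_j\|\Tepsx(\cdot,t)\|_{L^2}<\infty$ for a.e.\ $t$. Along a further subsequence chosen for countably many such $t$, and combined with the a.e.\ $t$ convergence in $L^2(\Om)$, this gives uniform convergence via the embedding $W^{1,2}\hra C^0$. I expect a diagonal argument over a dense set and continuity of the limit to be needed to get a single subsequence valid for all $t\notin N$.

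\textbf{Step 3: strong convergence of $\veps$ (main obstacle).} From (\ref{2.1}) one has only $\veps\wto u_t$ in $L^2_{loc}$, so (\ref{51.7}) is the delicate point. I would first pass to the limit in the weak forms to show that $(u,\Theta)$ satisfies (\ref{wu}), using $\feps(\Teps)\to f(\Theta)$ strongly. Then I would compare the energy identities. The approximate energy $\frac12\int_\Om\veps^2+\frac12\int_\Om\uepsx^2+\int_\Om\Teps$ is nonincreasing up to the $\eps$-dissipation terms, whereas the limit wave equation, driven by the $L^2$ forcing $f(\Theta)_x$ on $(\tau,T)$, satisfies an exact energy identity, which I would justify by time mollification. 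Combining weak lower semicontinuity, strong convergence of the $\Teps$-part and the matching identities would give $\limsup\int_0^T\int_\Om\veps^2\le\int_0^T\int_\Om u_t^2$, hence strong convergence. Near $t=0$ the lack of temperature bounds forces a separate treatment using (\ref{ie}) and small-time continuity.

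\textbf{Step 4: the temperature equation.} Finally I would pass to the limit in (\ref{wt}). The product $f'(\Theta)\Theta_x u_t$ requires exactly the strong convergence of Step 3 together with the weak convergence (\ref{51.9}). The regularity (\ref{17.1}) and (\ref{17.6}) is inherited from the uniform bounds, including $L^q$ estimates near $t=0$ obtained by duality arguments as in \cite{win_existence}.
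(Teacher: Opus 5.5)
The paper does not prove this lemma itself. It imports it from \cite[Lemmata 4.1, 7.3 and 8.1]{win_existence} and explicitly singles out (\ref{51.7}) as the one property that goes beyond what (\ref{2.1}) naturally yields. Your Steps 1, 2 and 4 are a plausible reconstruction of the routine compactness part. Step 3, however, is exactly that crucial ingredient, and as you describe it the argument is circular.

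Write $E_\eps(t):=\frac12\io\veps^2+\frac12\io\uepsx^2$ and $E(t):=\frac12\io u_t^2+\frac12\io u_x^2$. Then (\ref{0eps}) gives $\frac{d}{dt}E_\eps=-\eps\io(\vepsxx^2+\uepsxx^2)-\io\feps'(\Teps)\Tepsx\veps$ and $\frac{d}{dt}\io\Teps=\io\feps'(\Teps)\Tepsx\veps$. The limit wave equation gives $E(t)=E(0)-\int_0^t\io f'(\Theta)\Theta_xu_t$. Passing to the limit in the approximate total energy, using (\ref{ie}) and $\io\Teps(\cdot,t)\to\io\Theta(\cdot,t)$ for $t\notin N$, yields only
\[
\limsup_{\eps=\eps_j\searrow 0} E_\eps(t)\le E(0)+\io\Theta_0-\io\Theta(\cdot,t).
\]
To conclude $\limsup E_\eps(t)\le E(t)$ you would therefore need
\[
\io\Theta(\cdot,t)-\io\Theta_0\ge\int_0^t\io f'(\Theta)\Theta_xu_t .
\]
This is the energy balance of the \emph{limit} temperature equation. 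It amounts to identifying the weak limit of $\feps'(\Teps)\Tepsx\veps$, a product of two merely weakly convergent factors. You obtain that identification only in Step 4, and only by invoking Step 3.

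The information you actually have points the wrong way. Weak lower semicontinuity gives $\liminf E_\eps(t)\ge E(t)$, which combined with the approximate identity yields
\[
\limsup\int_0^t\io\feps'(\Teps)\Tepsx\veps\le\int_0^t\io f'(\Theta)\Theta_xu_t .
\]
That inequality is fully compatible with a defect in which mechanical energy is lost to oscillations of $\veps$ and $\Tepsx$ and reappears as heat in $\io\Theta$. Switching to the entropy balance (\ref{5.1}) does not help either: its limit contains the weak limit of $\feps'(\Teps)\zepsx^2$, which is again only bounded from below by its naive counterpart. In the paper the logic runs the other way: (\ref{51.7}) is what Lemma \ref{lem65} uses to upgrade (\ref{51.9}) to strong convergence. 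So energy and entropy bookkeeping plus lower semicontinuity cannot close Step 3. Some genuinely additional argument is required, and that is precisely what the paper outsources to \cite[Lemma 7.3]{win_existence}.

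A smaller point concerns (\ref{51.87}). The subsequences realizing $\liminf_j\|\Tepsx(\cdot,t)\|_{L^2(\Om)}$ depend on $t$. Moreover, no continuity of $t\mapsto\Theta(\cdot,t)$ in $C^0(\bom)$ is available to extend from a dense set of times. Instead, use
\[
\|\psi\|_{L^\infty(\Om)}\le C\|\psi_x\|_{L^2(\Om)}^{1/2}\|\psi\|_{L^2(\Om)}^{1/2}+C\|\psi\|_{L^2(\Om)} .
\]
Together with strong $L^2$ convergence and the $L^2((\tau,T);W^{1,2}(\Om))$ bound, this gives convergence in $L^2((\tau,T);L^\infty(\Om))$. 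Then extract a subsequence once and diagonalize over $\tau=\frac1m$, $T=m$.
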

As a final preliminary recorded in \cite[Lemmata 6.1 and 6.2]{win_existence}, 
we include a basic observation concerned with the entropy evoution property from (\ref{entropy}) 
when translated to the setting of (\ref{0eps}).
\begin{lem}\label{lem4}
  For $\eps\in (0,1)$, let $\veps$ and $\Teps$ be as in Lemma \ref{lem0}. Then the function defined by
  \be{zeps}
	\zeps(x,t):=\elleps(\Teps(x,t)),
	\quad x\in\bom, \ t\ge 0,
	\qquad \mbox{with} \qquad
	\elleps(\xi):= - \int_1^\xi \frac{d\sig}{\feps(\sig)},
	\quad \xi>0,
  \ee
  is an element of $C^{2,1}(\bom\times [0,\infty))$ and solves
  \be{0zeps}
	\lball
	\zepst = \zepsxx - \feps'(\Teps) \zepsx^2 + \vepsx,
	\qquad & x\in\Om, \ t>0, \\[1mm]
	\zeps=0,
	\qquad & x\in\pO, \ t>0, \\[1mm]
	\zeps(x,0)=\elleps(\Theta_{0\eps}),
	\qquad & x\in\Om,
	\ear
  \ee
  in the classical sense. Moreover,
  \be{5.1}
	\frac{d}{dt} \io \zeps = - \io \feps'(\Teps) \zepsx^2
	\qquad \mbox{for all } t>0.
  \ee
\end{lem}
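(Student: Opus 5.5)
The plan is a direct chain-rule computation. The only point needing care beforehand is that $\zeps$ is well-defined and regular, which requires $\Teps>0$ throughout $\bom\times[0,\infty)$. I expect this positivity to be the main, though mild, obstacle, since Lemma \ref{lem0} records a lower bound only on $(\tau,\infty)$ and only for $\eps<\epss(\tau)$, whereas here we need it for every fixed $\eps$ and every $t\ge 0$.

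\textbf{Positivity of $\Teps$.} For fixed $\eps\in(0,1)$, I will write the third equation in (\ref{0eps}) as a linear equation
\[
\Tepst=\Tepsxx-a_\eps(x,t)\,\Teps,
\qquad a_\eps:=\frac{\feps(\Teps)}{\Teps}\,\vepsx \quad (\mbox{with } a_\eps:=\feps'(0)\vepsx \mbox{ where } \Teps=0).
\]
Because $\feps\in C^2$ with $\feps(0)=0$ and $0<\feps'\le 2\Kf$ by (\ref{fe1}), the quotient $\feps(\xi)/\xi$ is continuous and bounded by $2\Kf$ on $[0,\infty)$. Also $\vepsx$ is continuous on $\bom\times[0,T]$ for each $T>0$. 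Hence $a_\eps$ is bounded on $\Om\times(0,T)$. The substitution $\Teps=e^{-Mt}w$ with $M\ge\|a_\eps\|_{L^\infty(\Om\times(0,T))}$, together with the comparison principle for the Neumann problem and $\Theta_{0\eps}>0$ in $\bom$, then gives $\Teps\ge e^{-MT}\min_{\bom}\Theta_{0\eps}>0$ on $\bom\times[0,T]$. Since $T>0$ is arbitrary, $\Teps>0$ everywhere.

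\textbf{Regularity and the equation.} From $\feps'>0$ and $\feps(0)=0$ we get $\feps>0$ on $(0,\infty)$. Thus $\elleps\in C^3((0,\infty))$, with $\elleps'=-1/\feps$ and $\elleps''=\feps'/\feps^2$. Composing with $\Teps\in C^{2,1}(\bom\times[0,\infty))$, which takes values in a compact subset of $(0,\infty)$ on each $\bom\times[0,T]$, yields $\zeps\in C^{2,1}(\bom\times[0,\infty))$. Next I compute
\[
\zepst=-\frac{\Tepst}{\feps(\Teps)},\qquad
\zepsx=-\frac{\Tepsx}{\feps(\Teps)},\qquad
\zepsxx=-\frac{\Tepsxx}{\feps(\Teps)}+\feps'(\Teps)\,\zepsx^2 .
\]
Inserting the third equation of (\ref{0eps}) into the first identity gives
\[
\zepst=-\frac{\Tepsxx}{\feps(\Teps)}+\vepsx=\zepsxx-\feps'(\Teps)\zepsx^2+\vepsx .
\]
The boundary condition $\Tepsx=0$ on $\pO$ translates into $\zepsx=0$ on $\pO$; this is a homogeneous Neumann condition, which I read as the intended boundary condition in (\ref{0zeps}). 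The initial condition follows immediately from $\Teps(\cdot,0)=\Theta_{0\eps}$.

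\textbf{The identity (\ref{5.1}).} Since $\zeps\in C^{2,1}$, I may integrate the equation over $\Om$ and differentiate under the integral sign. This gives
\[
\frac{d}{dt}\io\zeps=\io\zepsxx-\io\feps'(\Teps)\zepsx^2+\io\vepsx .
\]
The first term on the right vanishes because $\zepsx=0$ on $\pO$. The last term equals the boundary values of $\veps$, which vanish because $\veps=0$ on $\pO$. This leaves exactly (\ref{5.1}).
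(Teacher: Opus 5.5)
Your proof is correct. The paper does not prove this lemma itself but imports it from the author's earlier existence paper, and your direct chain-rule verification is the natural argument. Your linear-comparison step giving $\Teps>0$ on all of $\bom\times[0,\infty)$ is a harmless addition: positivity is already supplied by that earlier work, and is tacitly presupposed by the equation anyway, since $\feps$ is only defined on $[0,\infty)$. You were also right to treat $\zeps=0$ on $\pO$ in (\ref{0zeps}) as a misprint for the Neumann condition $\zepsx=0$. That condition is what is needed both for $\io\zepsxx=0$ in (\ref{5.1}) and for the integration by parts in the proof of Lemma \ref{lem43}.
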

\mysection{Stabilization of $\Theta$}
Now the first substantial step toward our large time analysis uses the approximate entropy identity in (\ref{5.1})
and exploits both the temporal monotonicity property and the dissipation process expressed therein.
In light of our knowledge on two-sided bounds for $\Teps$ documented in (\ref{36.1}) and (\ref{36.2}), 
namely, this yields a unique constant function
$\Tinf$ that appears as the only candidate for $\omega$-limits in the following, here yet temporally averaged, sense.
%
\begin{lem}\label{lem67}
  Let $\Theta$ be as obtained in Lemma \ref{lem51}. 
  Then there exists $\Theta_\infty>0$ such that
  \be{67.1}
	\int_{t-1}^t \|\Theta(\cdot,s)-\Tinf\|_{L^\infty(\Om)} ds \to 0
	\qquad \mbox{as } t\to\infty.
  \ee
  Moreover, the function defined by letting
  \be{z}
	z(x,t):=\ell(\Theta(x,t)),
	\quad (x,t)\in\Om\times (1,\infty),
	\qquad \mbox{with} \qquad
	\ell(\xi):=-\int_1^\xi \frac{d\sig}{f(\sig)},
	\quad \xi>0,
  \ee
  satisfies
  \be{67.01}
	z \in L^\infty(\Om\times (1,\infty)) \cap L^2_{loc}([1,\infty);W^{1,2}(\Om))
  \ee
  with
  \be{67.111}
	\int_1^\infty \io z_x^2 < \infty.
  \ee
\end{lem}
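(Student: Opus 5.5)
We work at the level of the approximate problems (\ref{0eps}) on $t\ge 1$ and pass to the limit $\eps=\eps_j\searrow 0$ via Lemma \ref{lem51}. First, by (\ref{36.1}) and (\ref{36.2}) with $\tau=1$, there are $c_1,c_2>0$ such that $c_1\le\Teps\le c_2$ in $\Om\times(1,\infty)$ for all $\eps=\eps_j<\epss(1)$. By (\ref{fe1}) and (\ref{fe2}), $\feps$ is then bounded above and below by positive constants on $[c_1,c_2]$, uniformly in $\eps$, and $\feps'$ is bounded below there by a positive constant; here we use that $f'>0$ on the compact set $[c_1,c_2]$ and that $\feps'\to f'$ uniformly. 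Consequently $\zeps=\elleps(\Teps)$ is bounded in $L^\infty(\Om\times(1,\infty))$.

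Integrating (\ref{5.1}) over $(1,T)$ gives
\bas
	\io\zeps(\cdot,1)-\io\zeps(\cdot,T)=\int_1^T\io\feps'(\Teps)\zepsx^2,
\eas
so that $\int_1^\infty\io\zepsx^2\le C$ uniformly in $\eps$. Since $\zepsx=-\Tepsx/\feps(\Teps)$, (\ref{51.8}), (\ref{51.9}) and (\ref{fe2}) show that $\zepsx\wto z_x=-\Theta_x/f(\Theta)$ in $L^2_{loc}(\bom\times(1,\infty))$. Lower semicontinuity then yields (\ref{67.111}), and (\ref{67.01}) follows from the $L^\infty$ bound on $z$ together with $\Theta\in L^2_{loc}((0,\infty);W^{1,2}(\Om))$ from (\ref{17.1}).

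Next we identify $\Tinf$. For each $\eps$ the function $t\mapsto\io\zeps(\cdot,t)$ is nonincreasing by (\ref{5.1}). Using (\ref{51.87}) and dominated convergence, $m(t):=\io z(\cdot,t)$ is therefore nonincreasing on $(1,\infty)\sm N$ and bounded, so it has a limit $m_\infty$. Since $\ell$ is a strictly decreasing bijection from $(0,\infty)$ onto its range, and the bounds $c_1\le\Theta\le c_2$ force $m_\infty/|\Om|\in[\ell(c_2),\ell(c_1)]$, there is a unique $\Tinf\in[c_1,c_2]$ with $|\Om|\ell(\Tinf)=m_\infty$.

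For the averaged convergence, we use the Poincar\'e--Sobolev inequality $\|\vp-\overline{\vp}\|_{L^\infty(\Om)}\le C_P\|\vp_x\|_{L^2(\Om)}$, where $\overline{\vp}$ denotes the spatial mean. For a.e.\ $s$ this gives
\bas
	\|z(\cdot,s)-\ell(\Tinf)\|_{L^\infty(\Om)}\le C_P\|z_x(\cdot,s)\|_{L^2(\Om)}+\frac{1}{|\Om|}|m(s)-m_\infty|.
\eas
Integrating over $(t-1,t)$ and applying Cauchy--Schwarz,
\bas
	\int_{t-1}^t\|z(\cdot,s)-\ell(\Tinf)\|_{L^\infty(\Om)}ds\le C_P\Big(\int_{t-1}^t\io z_x^2\Big)^{1/2}+\frac{1}{|\Om|}\sup_{s\ge t-1}|m(s)-m_\infty|.
\eas
Both terms tend to zero as $t\to\infty$, the first by (\ref{67.111}) and the second by monotone convergence of $m$. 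Finally, $\ell^{-1}$ is Lipschitz on the bounded range of $z$, since $|(\ell^{-1})'|=f(\Theta)\le f(c_2)$. Hence $|\Theta-\Tinf|\le f(c_2)|z-\ell(\Tinf)|$, which gives (\ref{67.1}).

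The main obstacle is the limit passage: securing uniform-in-$\eps$ positivity of $\feps'$ on $[c_1,c_2]$, and justifying the pointwise-in-time monotonicity of $m$, which requires restricting to $t\notin N$ and using (\ref{51.87}). All remaining steps are routine.
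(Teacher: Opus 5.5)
Your proposal is correct and follows essentially the same route as the paper. It uses the two-sided bounds on $\Teps$ from (\ref{36.1})--(\ref{36.2}), the dissipation identity (\ref{5.1}) with a uniform positive lower bound for $\feps'$, weak lower semicontinuity for (\ref{67.111}), monotonicity of the spatial mean of $z$, the Poincar\'e--Sobolev inequality with Cauchy--Schwarz, and the Lipschitz conversion from $z$ back to $\Theta$. The only cosmetic difference is that you control the two averaged terms separately, via the tail of (\ref{67.111}) and the monotone convergence of $m$, whereas the paper bounds both at once by $\oz(\ts)-z_\infty$ through the integrated inequality (\ref{67.3}).
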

\proof
  Using (\ref{36.2}) and (\ref{36.1}), we can find $\epss\in (0,1)$, $c_1>0$ and $c_2>0$ such that
  \be{67.2}	
	c_1 \le \Teps(x,t) \le c_2
	\qquad \mbox{for all $x\in\Om, t>1$ and } \eps\in (0,\epss),
  \ee
  which by (\ref{5.1}), in view of (\ref{fe2}) and the strict positivity of $f'$ on $(0,\infty)$, implies that with some $c_3>0$
  we have $\frac{d}{dt} \io \zeps \le - c_3\io \zepsx^2$ for all $t>1$ and $\eps\in (0,\epss)$, and hence
  \bas
	\io \zeps(\cdot,t) + c_3 \int_{\ts}^t \io \zepsx^2
	\le \io \zeps(\cdot,\ts)
	\qquad \mbox{for all $\ts>1, t>\ts$ and } \eps\in (0,\epss).
  \eas
  Thus, if we let the null set $N\subset (0,\infty)$ be as provided by Lemma \ref{lem51}, then since 
  $\zepsx \wto z_x$ in $L^2_{loc}(\bom\times [1,\infty))$
  as $\eps=\eps_j\searrow 0$ by (\ref{67.2}) with (\ref{51.9}), 
  relying on lower semicontinuity of norms in $L^2$ spaces with respect
  to weak convergence we see that (\ref{67.01}) holds, and that
  \be{67.3}
	\io z(\cdot,t) + c_3 \int_{\ts}^t \io z_x^2 \le \io z(\cdot,\ts)
	\qquad \mbox{for all $\ts\in (1,\infty)\sm N$ and } t\in (\ts,\infty)\sm N.
  \ee
  In particular, defining $\oz(t):=\frac{1}{|\Om|} \io z(\cdot,t)$ for $t\in (1,\infty)\sm N$ we obtain a nonincreasing function
  $\oz$ which thanks to (\ref{67.2}) maps $(1,\infty)\sm N$ into $[\ell(c_2),\ell(c_1)]$, 
  and which thanks to a Poincar\'e inequality has the property that with some $c_4>0$,
  \bas
	\|z(\cdot,t)-\oz(t)\|_{L^\infty(\Om)} \le c_4 \|z_x(\cdot,t)\|_{L^2(\Om)}
	\qquad \mbox{for all } t\in (1,\infty)\sm N.
  \eas
  Therefore, (\ref{67.3}) implies that if we let $z_\infty\in\R$ be such that
  \be{67.4}
	\oz(t)\searrow z_\infty
	\qquad \mbox{as } (1,\infty)\sm N \ni t \to \infty,
  \ee
  then by the Cauchy-Schwarz inequality we find that
  for all $\ts\in (1,\infty)\sm N$ and $t_0\in (\ts,\infty)\sm N$,
  \bas
	\int_{\ts}^{t_0} \|z(\cdot,s)-z_\infty\|_{L^\infty(\Om)} ds
	&\le& \int_{\ts}^{t_0} \|z(\cdot,s)-\oz(s)\|_{L^\infty(\Om)} ds
	+ \int_{\ts}^{t_0} (\oz(s)- z_\infty) ds \\
	&\le& c_4 \int_{\ts}^{t_0} \|z_x(\cdot,s)\|_{L^2(\Om)} ds
	+ (\oz(\ts)-z_\infty) \cdot (t_0-\ts) \\
	&\le& c_4 \cdot \bigg\{ \int_{\ts}^{t_0} \io z_x^2 \bigg\}^\frac{1}{2} \cdot (t_0-\ts)^\frac{1}{2}
	+ (\oz(\ts)-z_\infty) \cdot (t_0-\ts) \\
	&\le& c_3^{-\frac{1}{2}} c_4 |\Om|^{-\frac{1}{2}} \cdot (\oz(\ts)-\oz(t_0))^\frac{1}{2} (t_0-\ts)^\frac{1}{2}
	+ (\oz(\ts)-z_\infty) \cdot (t_0-\ts) \\
	&\le& c_3^{-\frac{1}{2}} c_4 |\Om|^{-\frac{1}{2}} \cdot (\oz(\ts)-z_\infty)^\frac{1}{2} (t_0-\ts)^\frac{1}{2}
	+ (\oz(\ts)-z_\infty) \cdot (t_0-\ts).
  \eas   
  According to the fact that $(t-2,t-1)\sm N\ne\emptyset$ and $(t,t+1)\sm N\ne\emptyset$ for all $t>3$, due to (\ref{67.4}) this
  implies that
  \bas
	\int_{t-1}^t \|z(\cdot,s)-z_\infty\|_{L^\infty(\Om)} ds \to 0
	\qquad \mbox{as } (3,\infty)\ni t \to\infty,
  \eas
  so that since the strict monotonicity of $\ell$ ensures the existence of $\Tinf\in [c_1,c_2]$ such that $\ell(z_\infty)=\Tinf$,
  (\ref{67.1}) follows upon observing that for a.e.~$(x,t)\in \Om\times (1,\infty)$ we can find $\xi(x,t)\in [c_1,c_2]$ such that
  \bas
	|z(x,t)-z_\infty|
	= \big| \ell(\Theta(x,t))-\ell(\Tinf) \big|
	= \Big| \frac{1}{f(\xi(x,t))} \Big| \cdot |\Theta(x,t)-\Tinf|,
  \eas
  and that here $\big|\frac{1}{f(\xi(x,t))}| \ge \frac{1}{f(c_2)}$ by monotonicity of $f$.
  The integrability property in (\ref{67.111}), finally, is a direct consequence of (\ref{67.3}) and the boundedness of $\oz$.
\qed
Transforming the above into genuine convergence with respect to the norm in $L^\infty(\Om)$ will rely on the following
outcome of a Moser-type recursive argument, revealing that presupposed smallness of $\Teps-a$ at some time and for some $a\ge 0$
is essentially conserved at least throughout an adjacent time interval of favorably controllable length. 
\begin{lem}\label{lem61}
  There exists $b>0$ such that for each $a\ge 0$ one can find $C(a)>0$ with the property that whenever $\eps\in (0,1)$ and
  $\ts>1$ are such that for the solution of (\ref{0eps}) we have
  \be{61.1}
	\|\Teps(\cdot,\ts)-a\|_{L^\infty(\Om)}^\frac{1}{b} \le \frac{1}{2},
  \ee
  it follows that
  \be{61.2}
	\|\Teps(\cdot,t)-a\|_{L^\infty(\Om)}
	\le C(a) \|\Teps(\cdot,\ts)-a\|_{L^\infty(\Om)}
	+ C(a) \cdot (t-\ts)^b
	\qquad \mbox{for all $t\in (\ts,\ts+\frac{1}{2})$.}
  \ee
\end{lem}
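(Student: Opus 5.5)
The plan is to run a Moser-type iteration separately for the positive and negative parts of $\Teps-a$ on the time interval $(\ts,\ts+\frac12)$. The only inputs will be the uniform $L^2$ bound (\ref{2.1}) for $\veps$ and the uniform upper bound (\ref{36.1}) for $\Teps$ on $\Om\times(1,\infty)$; this is why $\ts>1$ is required.

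\textbf{Testing step.} Set $w:=(\Teps-a)_+$; the function $(a-\Teps)_+$ is handled in the same way, with the sign of the forcing term reversed. For $p\ge 2$ I test the third equation in (\ref{0eps}) against $w^{p-1}$. The Neumann condition removes the boundary terms, and after an integration by parts in the forcing term, using $\veps=0$ on $\pO$, I get
\bas
	\frac{1}{p}\frac{d}{dt}\io w^p + (p-1)\io w^{p-2}w_x^2
	= \io \veps\,\big(\feps(\Teps) w^{p-1}\big)_x .
\eas
Since $w_x=\Tepsx$ on $\{w>0\}$, the right-hand side equals
\bas
	\io \veps\,\big(\feps'(\Teps) w^{p-1} + (p-1)\feps(\Teps) w^{p-2}\big) w_x .
\eas
By (\ref{fe1}) and (\ref{36.1}), both $\feps(\Teps)$ and $\feps'(\Teps)$ are bounded on $\Om\times(1,\infty)$, uniformly in $\eps$. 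Hence Young's inequality absorbs this into half of the dissipative term and leaves
\bas
	\frac{d}{dt}\io w^p + c\io \big|(w^{p/2})_x\big|^2
	\le C p^2 \io \veps^2 (w^{p-2}+w^p).
\eas
Note that $w$ itself is bounded, so $w^p\le c\,w^{p-2}$. By (\ref{2.1}), the right-hand side is at most $Cp^2\|w^{p/2}\|_{L^\infty(\Om)}^{2(p-2)/p}$, up to harmless lower-order terms.

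\textbf{Gagliardo--Nirenberg step.} In one dimension,
\bas
	\|\psi\|_{L^\infty(\Om)}^2 \le C\|\psi_x\|_{L^2(\Om)}\|\psi\|_{L^1(\Om)} + C\|\psi\|_{L^1(\Om)}^2 ,
\eas
and I apply it with $\psi=w^{p/2}$. This controls the $L^\infty$ norm by the dissipation together with $\io w^{p/2}$. After a further Young absorption I obtain a recursive inequality for
\bas
	M_k(t):=\sup_{s\in(\ts,t)}\io w^{p_k}(\cdot,s), \qquad p_k:=2^k .
\eas
It has the form
\bas
	M_k(t) \le \io w^{p_k}(\cdot,\ts) + C^k\,(t-\ts)^{\gamma}\,\max\big\{1,M_{k-1}(t)\big\}^{\beta_k}.
\eas
Here the factor $(t-\ts)^\gamma$, with some fixed $\gamma>0$, comes from integrating the source over a time interval of length $t-\ts$ and applying H\"older in time to the part of the dissipation used. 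The exponents $\beta_k$ are such that $\beta_k/p_k$ is comparable to $2/p_{k-1}$, up to an error of order $1/p_k$.

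\textbf{Iteration step.} Taking $p_k$-th roots and using $\io w^{p_k}(\cdot,\ts)\le |\Om|\,\|w(\cdot,\ts)\|_{L^\infty(\Om)}^{p_k}$, I iterate. I will not normalize by $\max\{1,\cdot\}$ as in the standard scheme, because here the goal is a bound that is \emph{small} when both $\|w(\cdot,\ts)\|_{L^\infty(\Om)}$ and $t-\ts$ are small. So I track $A_k:=M_k^{1/p_k}$ and aim to show $A_k\le C\,\delta+C\,(t-\ts)^b$ inductively, where $\delta:=\|\Teps(\cdot,\ts)-a\|_{L^\infty(\Om)}$. The constants $C^{k/p_k}$ form a convergent product. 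The powers of $(t-\ts)$ accumulate geometrically, as $(t-\ts)^{\gamma/p_k}$ summed appropriately against $A_{k-1}^{\beta_k/p_k}$, and this is where the exponent $b$ is fixed. The hypothesis (\ref{61.1}) gives $\delta\le 2^{-b}$, and together with $t-\ts<\frac12$ this makes $\delta+(t-\ts)^b\le 1$. In this regime the nonlinear recursion $A_k\lesssim \delta+(t-\ts)^{\gamma/p_k}A_{k-1}^{\theta_k}$, with $\theta_k$ slightly less than or close to $1$, stays below a fixed multiple of $\delta+(t-\ts)^b$ rather than drifting to size $1$. Letting $k\to\infty$ then yields (\ref{61.2}).

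\textbf{Main obstacle.} The hardest step is the bookkeeping in the iteration. The forcing $\feps(\Teps)\vepsx$ is controlled only through $\|\veps\|_{L^2(\Om)}$, with no time decay and no smallness. The smallness in (\ref{61.2}) must therefore come entirely from the length of the time interval, and the exponents have to be chosen so that the accumulated power of $t-\ts$ does not degenerate to zero as $k\to\infty$. My first attempt will fix $\gamma$ from a single H\"older step and check that $\prod_k (t-\ts)^{\gamma/p_k\cdot(\text{weights})}$ converges to a positive power $b$. If the weights cause loss, I would instead apply the recursion to $M_k$ on shrinking subintervals, in the spirit of De Giorgi, to retain a fixed power of $t-\ts$.
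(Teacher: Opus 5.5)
Your testing step is sound. Using (\ref{36.1}) to bound $\feps(\Teps)$ and $w$ is a legitimate substitute for the paper's splitting $\Teps^2\le 2(\Teps-a)^2+2a^2$. Treating positive and negative parts separately, instead of using even powers of $\Teps-a$, is also harmless.

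The gap is in the iteration, which is the real content of the lemma. The recursion you write, $M_k\le \io w^{p_k}(\cdot,\ts)+C^k(t-\ts)^\gamma\max\{1,M_{k-1}\}^{\beta_k}$, cannot yield (\ref{61.2}). However small $M_{k-1}$ is, it only gives $M_k\le|\Om|\delta^{p_k}+C^k(t-\ts)^\gamma$. After taking $p_k$-th roots, $(t-\ts)^{\gamma/p_k}\to 1$, so the limit bound is of order one. A factor $t-\ts$ gained once per step survives the roots only if the smallness is built into the iterated quantity, with an exponent growing like $p_k$.

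Your alternative, proving $A_k\le C\delta+C(t-\ts)^b$ by induction, has no base at $k=0$: $A_0=\sup_s\io w(\cdot,s)$ is bounded but not small. The way $b$ gets ``fixed'' is never specified, and your last paragraph openly leaves this open, with a De Giorgi fallback.

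There are two smaller errors. First, the interpolation inequality you quote is false: a bump of height $1$ and width $\lambda^{-1}$ makes its right side $O(\lambda^{-1/2})$. The correct form is $\|\psi\|_{L^\infty(\Om)}\le C\|\psi_x\|_{L^2(\Om)}^{2/3}\|\psi\|_{L^1(\Om)}^{1/3}+C\|\psi\|_{L^1(\Om)}$. It produces the exponent $\frac{2p_k-4}{p_k+4}$ on $\io w^{p_{k-1}}$, and that exponent is what restricts $b$. Second, (\ref{61.1}) gives only $\delta+(t-\ts)^b<2^{1-b}$, not $\le 1$.

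The idea you are missing is the paper's conversion of initial smallness into a time shift. Set $\del:=\delta^{1/b}\le\frac12$, $p_k=2^k$ and $\al_k=(2^k-1)b$, and iterate $M_k:=1+\sup_{t\in(\ts,\ts+\frac12)}(t-\ts+\del)^{-\al_k}\io|\Teps-a|^{p_k}$.
\begin{itemize}
\item Since $\al_k=2\al_{k-1}+b$, the factor $t-\ts+\del$ gained by integrating in time covers the deficit $(t-\ts+\del)^b$ in the term with $\big(\io|\Teps-a|^{p_{k-1}}\big)^2$.
\item For $b<\frac17$ it also covers the deficit in the term with exponent $\frac{2p_k-4}{p_k+4}$; this is (\ref{61.5}).
\item The initial term is absorbed via $|\Om|\delta^{p_k}=|\Om|\del^{bp_k}\le|\Om|\del^{\al_k}$, using $\del\le 1$.
\end{itemize}
This gives $M_k\le c^kM_{k-1}^2$, hence $M_k^{2^{-k}}\le c^2M_0$ with $M_0$ merely bounded. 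Then $\al_k/p_k\to b$ yields $\|\Teps(\cdot,t)-a\|_{L^\infty(\Om)}\lesssim (t-\ts+\del)^b\lesssim (t-\ts)^b+\delta$.

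Your unnormalized scheme can also be closed, but only with exactly the bookkeeping you omit:
\begin{itemize}
\item Start at $k=1$, where $\frac{d}{dt}\io w^2\le C$ gives $A_1\lesssim\delta+(t-\ts)^{1/2}$.
\item Assume $A_{k-1}\le X:=K_{k-1}(\delta+(t-\ts)^b)$ with $K_{k-1}\ge 1$.
\item Absorb $|\Om|\delta^{p_k}\le|\Om|X^{p_k}$ multiplicatively. Adding $\delta$ after taking roots would make the constants grow linearly in $k$.
\item If $X\ge 1$, every source term is trivially at most $X^{p_k}$. If $X<1$, use $(t-\ts)X^{-6p_k/(p_k+4)}\le (t-\ts)^{1-6b}\le 1$ for $b\le\frac16$.
\end{itemize}
Then $K_k=(|\Om|+Cp_k^6)^{1/p_k}K_{k-1}$ stays bounded, which completes the argument.
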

\proof
  We fix any $b\in (0,\frac{1}{7})$ and then obtain that
  \be{61.3}
	p_k:=2^k
	\quad \mbox{and} \quad
	\al_k:=(2^k-1)b,
	\qquad k\ge 0,
  \ee
  satisfy 
  \be{61.4}
	\al_k=2\al_{k-1} + b 
	\qquad \mbox{for all } k\ge 1
  \ee
  and hence
  \bea{61.5}
	\frac{2p_k-4}{p_k+4} \cdot \al_{k-1} + 1 - \al_k
	&=& \Big(\frac{2p_k-4}{p_k+4} - 2 \Big)\cdot\al_{k-1} + 1 - b
	= \frac{-12}{2^k+4} \cdot (2^{k-1}-1)b + 1-b  \nn\\
	&=& \frac{-12 \cdot 2^{k-1} + 12 - 2^k - 4}{2^k+4} \cdot b + 1 
	= \frac{-7+8\cdot 2^{-k}}{1+4\cdot 2^{-k}} \cdot b + 1 \nn\\
	&\ge& \frac{-7}{1+4\cdot 2^{-k}} \cdot b + 1 
	\ge -7b+1 \ge 0
	\qquad \mbox{for all } k\ge 1.
  \eea
  We moreover employ Lemma \ref{lem0} to find $c_1>0$ and $c_2>0$ such that
  \be{61.67}
	\io \Teps \le c_1
	\quad \mbox{and} \quad
	\io \veps^2 \le c_2
	\qquad \mbox{for all $t>0$ and } \eps\in (0,1),
  \ee
  and henceforth assuming that $a\ge 0$, $\eps\in (0,1)$ and $\ts>1$ are such that (\ref{61.1}) holds, we let
  \bas
	d(x,t):=\Teps(x,t)-a,
	\quad \mbox{$(x,t)\in\bom\times [\ts,\infty)$}
	\qquad \mbox{and} \qquad
	\del:=\|\Teps(\cdot,\ts)-a\|_{L^\infty(\Om)}^\frac{1}{b},
  \eas
  observing that due to (\ref{61.3}) and (\ref{61.67}), for
  \be{61.8}
	M_k:=1+\sup_{t\in (\ts,\ts+\frac{1}{2})} \bigg\{ (t-\ts+\del)^{-\al_k} \cdot \io |d|^{p_k}(\cdot,t) \bigg\},
	\qquad k\ge 0,
  \ee
  we have
  \be{61.9}
	M_0
	= 1 + \sup_{t\in (\ts,\ts+\frac{1}{2})} \io |\Teps(\cdot,t)-a|
	\le 1 + (c_1+a)|\Om|.
  \ee
  To estimate $M_k$ for $k\ge 1$, noting that then $p_k$ is even we can use (\ref{0eps}) to see that thanks to Young's inequality,
  (\ref{61.67}) and (\ref{fe1}),
  \bea{61.10}
	\hs{-6mm}
	\frac{d}{dt} \io d^{p_k}	
	&=& - p_k(p_k-1) \io d^{p_k-2} d_x^2
	+ p_k \io d^{p_k-1} \feps'(\Teps) d_x \veps
	+ p_k(p_k-1) \io d^{p_k-2} \feps(\Teps) d_x \veps \nn\\
	&\le& - \frac{p_k(p_k-1)}{2} \io d^{p_k-2} d_x^2
	+ \frac{p_k}{p_k-1} \io d^{p_k} \feps'^2(\Teps) \veps^2
	+ p_k(p_k-1) \io d^{p_k-2} \feps^2(\Teps) \veps^2 \nn\\
	&\le& - \frac{p_k(p_k-1)}{2} \io d^{p_k-2} d_x^2
	+ 4 \Kf^2 p_k^2 \io d^{p_k} \veps^2
	+ 4 \Kf^2 p_k^2 \io d^{p_k-2} \Teps^2 \veps^2 \nn\\
	&\le& - \frac{p_k(p_k-1)}{2} \io d^{p_k-2} d_x^2
	+ 4 c_2 \Kf^2 p_k^2 \|d^{p_k}\|_{L^\infty(\Om)}
	+ 4 c_2 \Kf^2 p_k^2 \|d^{p_k-2} \Teps^2\|_{L^\infty(\Om)} \nn\\
	&\le& - \frac{p_k(p_k-1)}{2} \io d^{p_k-2} d_x^2
	+ 4 c_2 \Kf^2 p_k^2 \|d^{p_k}\|_{L^\infty(\Om)} \nn\\
	& & + 8 c_2 \Kf^2 p_k^2 \|d^{p_k-2} (\Teps-a)^2\|_{L^\infty(\Om)} 
	+ 8 c_2 a^2 \Kf^2 p_k^2 \|d^{p_k-2}\|_{L^\infty(\Om)} \nn\\
	&\le& - \frac{p_k(p_k-1)}{2} \io d^{p_k-2} d_x^2
	+ c_3 p_k^2 \|d^\frac{p_k}{2}\|_{L^\infty(\Om)}^2
	+ c_3 p_k^2 \|d^\frac{p_k}{2}\|_{L^\infty(\Om)}^\frac{2p_k-4}{p_k}
	\qquad \mbox{for all } t>\ts
  \eea
  with $c_3\equiv c_3(a):=12c_2 \Kf^2 + 8c_2 a^2 \Kf^2$, because $\frac{p_k}{p_k-1} \le p_k^2$ and $p_k(p_k-1)\le p_k^2$,
  and because $\Teps^2 \le 2 (\Teps-a)^2 + 2a^2$.
  We now employ a Gagliardo-Nirenberg inequality to find $c_4\ge 1$ such that
  \bas
	\|\psi\|_{L^\infty(\Om)}
	\le c_4 \|\psi_x\|_{L^2(\Om)}^\frac{2}{3} \|\psi\|_{L^1(\Om)}^\frac{1}{3}
	+ c_4 \|\psi\|_{L^1(\Om)}
	\qquad \mbox{for all } \psi\in C^1(\bom),
  \eas
  and use this twice along with Young's inequality in estimating
  \bea{61.11}
	c_3 p_k^2 \|d^\frac{p_k}{2}\|_{L^\infty(\Om)}^2
	&\le& 2c_3 c_4^2 p_k^2 \big\| (d^\frac{p_k}{2})_x\big\|_{L^2(\Om)}^\frac{4}{3} \|d^\frac{p_k}{2}\|_{L^1(\Om)}^\frac{2}{3}
	+ 2c_3 c_4^2 p_k^2 \|d^\frac{p_k}{2}\|_{L^1(\Om)}^2 \nn\\
	&=& \Big\{ \frac{1}{2} \big\| (d^\frac{p_k}{2})_x\big\|_{L^2(\Om)}^2 \Big\}^\frac{2}{3} \cdot
		2^\frac{5}{3} c_3 c_4^2 p_k^2 \|d^\frac{p_k}{2}\|_{L^1(\Om)}^\frac{2}{3}
	+ 2c_3 c_4^2 p_k^2 \|d^\frac{p_k}{2}\|_{L^1(\Om)}^2 \nn\\
	&\le& \frac{1}{2} \big\| (d^\frac{p_k}{2})_x\big\|_{L^2(\Om)}^2
	+ 32 c_3^3 c_4^6 p_k^6 \|d^\frac{p_k}{2}\|_{L^1(\Om)}^2
	+ 2c_3 c_4^2 p_k^2 \|d^\frac{p_k}{2}\|_{L^1(\Om)}^2 \nn\\
	&\le& \frac{p_k^2}{8} \io d^{p_k-2} d_x^2
	+ c_5 p_k^6 \|d^\frac{p_k}{2}\|_{L^1(\Om)}^2
	\qquad \mbox{for all } t>\ts
  \eea
  and
  \bea{61.12}
	c_3 p_k^2 \|d^\frac{p_k}{2}\|_{L^\infty(\Om)}^\frac{2p_k-4}{p_k}
	&\le& (2c_4)^\frac{2p_k-4}{p_k} c_3 p_k^2 \big\| (d^\frac{p_k}{2})_x\big\|_{L^2(\Om)}^\frac{4p_k-8}{3p_k} 
		\|d^\frac{p_k}{2}\|_{L^1(\Om)}^\frac{2p_k-4}{3p_k}
	+ (2c_4)^\frac{2p_k-4}{p_k} c_3 p_k^2 \|d^\frac{p_k}{2}\|_{L^1(\Om)}^\frac{2p_k-4}{p_k} \nn\\
	&=& \Big\{ \frac{1}{2} \big\| (d^\frac{p_k}{2})_x\big\|_{L^2(\Om)}^2 \Big\}^\frac{2p_k-4}{3p_k} \cdot
		2^\frac{2p_k-4}{3p_k} \cdot (2c_4)^\frac{2p_k-4}{p_k} c_3 p_k^2 \|d^\frac{p_k}{2}\|_{L^1(\Om)}^\frac{2p_k-4}{3p_k}
		\nn\\
	& & + (2c_4)^\frac{2p_k-4}{p_k} c_3 p_k^2 \|d^\frac{p_k}{2}\|_{L^1(\Om)}^\frac{2p_k-4}{p_k} \nn\\
	&\le& \frac{1}{2} \big\| (d^\frac{p_k}{2})_x\big\|_{L^2(\Om)}^2
	+ 2^\frac{2p_k-4}{p_k+4} \cdot (2c_4)^\frac{6p_k-12}{p_k+4} c_3^\frac{3p_k}{p_k+4} p_k^\frac{6p_k}{p_k+4}
		\|d^\frac{p_k}{2}\|_{L^1(\Om)}^\frac{2p_k-4}{p_k+4} \nn\\
	& & + (2c_4)^\frac{2p_k-4}{p_k} c_3 p_k^2 \|d^\frac{p_k}{2}\|_{L^1(\Om)}^\frac{2p_k-4}{p_k} \nn\\
	&\le& \frac{p_k^2}{8} \io d^{p_k-2} d_x^2
	+ c_6 p_k^6 \|d^\frac{p_k}{2}\|_{L^1(\Om)}^\frac{2p_k-4}{p_k+4} \nn\\
	& & + c_7 p_k^6 \|d^\frac{p_k}{2}\|_{L^1(\Om)}^\frac{2p_k-4}{p_k}
	\qquad \mbox{for all } t>\ts
  \eea
  with $c_5\equiv c_5(a):=32 c_3^3 c_4^6 + 2c_3 c_4^2$,
  $c_6\equiv c_6(a):=4\cdot (2c_4)^6 \cdot \max\{1,c_3^3\}$ and $c_7\equiv c_7(a):=(2c_4)^2 c_3$,
  because $\frac{2p_k-4}{p_k+4} \le 2, \frac{6p_k-12}{p_k+4} \le 6, \frac{3p_k}{p_k+4} \le 3$ and $\frac{2p_k-4}{p_k} \le 2$,
  and because $2c_4\ge 1$.
  As $\frac{2p_k-4}{p_k+4} \le \frac{2p_k-4}{p_k} \le 2$ and hence
  \bas
	 \|d^\frac{p_k}{2}\|_{L^1(\Om)}^\frac{2p_k-4}{p_k}
	\le  \|d^\frac{p_k}{2}\|_{L^1(\Om)}^2 +  \|d^\frac{p_k}{2}\|_{L^1(\Om)}^\frac{2p_k-4}{p_k+4}
	\qquad \mbox{for all } t>\ts
  \eas
  by Young's inequality, in view of the fact that $\frac{p_k(p_k-1)}{2} \ge \frac{p_k^2}{4}=2\cdot \frac{p_k^2}{8}$ we infer
  from (\ref{61.10}), (\ref{61.11}) and (\ref{61.12}) that with $c_8\equiv c_8(a):=c_5+c_7$ and $c_9\equiv c_9(a):=c_6+c_7$,
  \bas
	\frac{d}{dt} \io d^{p_k}
	\le c_8 p_k^6 \|d^\frac{p_k}{2}\|_{L^1(\Om)}^2
	+ c_9 p_k^6 \|d^\frac{p_k}{2}\|_{L^1(\Om)}^\frac{2p_k-4}{p_k+4}
	\qquad \mbox{for all } t>\ts,
  \eas
  where in view of (\ref{61.8}) and the identity $\frac{p_k}{2}=p_{k-1}$, for all $t\in (\ts,\ts+\frac{1}{2})$ we have
  \bas
	\|d^\frac{p_k}{2}\|_{L^1(\Om)}^2 \le (t-\ts+\del)^{2\al_{k-1}} M_{k-1}^2
  \eas
  and
  \bas
	\|d^\frac{p_k}{2}\|_{L^1(\Om)}^\frac{2p_k-4}{p_k+4}
	\le (t-\ts+\del)^{\frac{2p_k-4}{p_k+4}\cdot \al_{k-1}} M_{k-1}^\frac{2p_k-4}{p_k+4}
	\le (t-\ts+\del)^{\frac{2p_k-4}{p_k+4}\cdot \al_{k-1}} M_{k-1}^2,
  \eas
  as $M_{k-1} \ge 1$. Therefore,
  \bas
	\frac{d}{dt} \io d^{p_k} 
	\le c_8 p_k^6 (t-\ts+\del)^{2\al_{k-1}} M_{k-1}^2
	+ c_9 p_k^6 (t-\ts+\del)^{\frac{2p_k-4}{p_k+4}\cdot\al_{k-1}} M_{k-1}^2
	\qquad \mbox{for all $t\in (\ts,\ts+\frac{1}{2})$},
  \eas
  which upon an integration shows that for all $t>\ts$,
  \bea{61.111}
	\io d^{p_k}(\cdot,t)
	\le \io d^{p_k}(\cdot,\ts)
	+ c_8 p_k^6 (t-\ts+\del)^{2\al_{k-1}+1} M_{k-1}^2
	+ c_9 p_k^6 (t-\ts+\del)^{\frac{2p_k-4}{p_k+4}\cdot\al_{k-1}+1} M_{k-1}^2,
  \eea
  since 
  \bas
	\int_{\ts}^t (s-\ts+\del)^\iota
	\le (t-\ts+\del)^\iota \cdot (t-\ts) 
	\le (t-\ts+\del)^{\iota+1}
	\qquad \mbox{for all $t>\ts$ and } \iota>0.
  \eas
  We now rely on the smallness assumption in (\ref{61.1}), which namely means that $\del\le\frac{1}{2}$, and that for
  $t\in (\ts,\ts+\frac{1}{2})$ we may utilize (\ref{61.4}) and (\ref{61.5}) to see that
  \bas
	(t-\ts+\del)^{2\al_{k-1}+1} = (t-\ts+\del)^{\al_k} (t-\ts+\del)^{1-b} \le (t-\ts+\del)^{\al_k}
  \eas
  and
  \bas
	(t-\ts+\del)^{\frac{2p_k-4}{p_k+4}\cdot \al_{k-1}+1}
	= (t-\ts+\del)^{\al_k} (t-\ts+\del)^{\frac{2p_k-4}{p_k+4}\cdot \al_{k-1} +1-\al_k}
	\le (t-\ts+\del)^{\al_k}.
  \eas
  As (\ref{61.1}) together with (\ref{61.3}) and the inequalities $\del\le 1$, $M_{k-1}\ge 1$ and $p_k\ge 1$ furthermore ensures that
  \bas
	\io d^{p_k}(\cdot,\ts)
	&\le& |\Om| \cdot \|d(\cdot,\ts)\|_{L^\infty(\Om)}^{p_k}
	= |\Om| \del^{bp_k}
	= |\Om| (\del^{\al_k})^\frac{bp_k}{\al_k}
	= |\Om| (\del^{\al_k})^\frac{2^k}{2^k-1} \\
	&\le& |\Om| \del^{\al_k}
	\le |\Om| p_k^6 (t-\ts+\del)^{\al_k} M_{k-1}^2
	\qquad \mbox{for all } t>\ts,
  \eas
  from (\ref{61.111}) we infer that
  \bas
	\io d^{p_k}(\cdot,t) \le c_{10} p_k^6 (t-\ts+\del)^{\al_k} M_{k-1}^2
	\qquad \mbox{for all $t\in (\ts,\ts+\frac{1}{2})$}
  \eas
  with $c_{10}\equiv c_{10}(a):=|\Om|+ c_8 + c_9$.
  Hence, writing $c_{11}\equiv c_{11}(a):=64(1+c_{10})$ we find that
  \bas
	M_k \le 1 + c_{10} p_k^6 M_{k-1}^2
	\le (1+c_{10})^k \cdot 2^{6k} M_{k-1}^2
	= c_{11}^k M_{k-1}^2
	\qquad \mbox{for all } k\ge 1,
  \eas
  which by a straightforward induction implies that 
  \bas
	M_k \le c_{11}^{2^{k+1}-k-2} M_0^{2^k}
	\qquad \mbox{for all } k\ge 1.
  \eas
  Therefore, $M_k^{\frac{1}{2^k}} \le c_{11}^2 M_0$ for all $k\ge 1$, so that again in view of (\ref{61.8}),
  \bas
	\|d(\cdot,t)\|_{L^{p_k}(\Om)}
	\le c_{11}^2 M_0 \cdot (t-\ts+\del)^\frac{\al_k}{p_k}
	\qquad \mbox{for all $t\in (\ts,\ts+\frac{1}{2})$}.
  \eas
  Since $\frac{\al_k}{p_k} \to b$ as $k\to\infty$, this entails that
  \bas
	\|d(\cdot,t)\|_{L^\infty(\Om)}
	\le c_{11}^2 M_0 \cdot (t-\ts+\del)^b
	\le 2^b c_{11}^2 M_0 \cdot \big\{ (t-\ts)^b + \del^b\big\}
	\qquad \mbox{for all $t\in (\ts,\ts+\frac{1}{2})$},
  \eas
  which due to (\ref{61.9}) yields (\ref{61.2}) with $C(a):=2^b c_{11}^2 \cdot \big(1+(c_1+a)|\Om|\big)$.
\qed
In fact, combining the precedent two lemmata yields genuine stabilization in the temperature variable:
\begin{lem}\label{lem68}
  If the function $\Theta$ and the null set $N\subset (0,\infty)$ are as in Lemma \ref{lem51}, then the  
  number $\Tinf$ from Lemma \ref{lem67} has the property that
  \be{68.1}
	\Theta(\cdot,t) \to \Tinf
	\quad \mbox{in } L^\infty(\Om)
	\qquad \mbox{as } (0,\infty) \sm N \ni t\to\infty.
  \ee
\end{lem}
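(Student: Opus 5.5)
Given $\eta>0$, the aim is to find $T>1$ such that $\|\Theta(\cdot,t)-\Tinf\|_{L^\infty(\Om)}\le\eta$ for all $t\in(T,\infty)\sm N$. We apply Lemma \ref{lem61} with $a:=\Tinf$, which fixes $b>0$ and $C:=C(\Tinf)$. The averaged decay (\ref{67.1}) only guarantees a good time in each unit window. Lemma \ref{lem61} transports smallness at such a good time across the following half-unit interval, at the level of the approximations $\Teps$. The passage to $\Theta$ is then made at times outside $N$ via (\ref{51.87}).

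First we choose $h\in(0,\frac14)$ small enough that $C h^b\le\frac{\eta}{2}$. Then we pick $\mu>0$ with $C\mu\le\frac{\eta}{4}$ and $\mu^{1/b}\le\frac14$. By (\ref{67.1}) there is $T_0$ such that
\[
\int_{s-h}^{s}\|\Theta(\cdot,\sigma)-\Tinf\|_{L^\infty(\Om)}\,d\sigma<h\mu
\qquad\mbox{for all } s>T_0 .
\]
Indeed, $\int_{s-1}^s\to0$ dominates $\int_{s-h}^s$. Hence for every $s>T_0+1$ the set of $\ts\in(s-h,s)\sm N$ with $\|\Theta(\cdot,\ts)-\Tinf\|_{L^\infty(\Om)}<\mu$ has positive measure, so it is nonempty. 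We set $T:=T_0+1$.

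Now fix $t\in(T,\infty)\sm N$. We choose $\ts\in(t-h,t)\sm N$ with $\|\Theta(\cdot,\ts)-\Tinf\|_{L^\infty(\Om)}<\mu$. By (\ref{51.87}), $\Teps(\cdot,\ts)\to\Theta(\cdot,\ts)$ uniformly along $\eps=\eps_j$. Hence for large $j$ we have $\|\Teps(\cdot,\ts)-\Tinf\|_{L^\infty(\Om)}<\mu$, and so (\ref{61.1}) holds, since $\mu^{1/b}\le\frac14\le\frac12$. Moreover $\ts>1$. Since $t-\ts<h<\frac12$, (\ref{61.2}) gives
\[
\|\Teps(\cdot,t)-\Tinf\|_{L^\infty(\Om)}\le C\mu+Ch^b\le\tfrac{3\eta}{4}.
\]
Since $t\notin N$, letting $j\to\infty$ and using (\ref{51.87}) again yields $\|\Theta(\cdot,t)-\Tinf\|_{L^\infty(\Om)}\le\frac{3\eta}{4}<\eta$. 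This proves (\ref{68.1}).

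The main obstacle is the bookkeeping between the limit $\Theta$ and the approximations $\Teps$. Lemma \ref{lem61} is stated only for $\Teps$, while (\ref{67.1}) concerns only $\Theta$, and $L^\infty$ information on $\Theta$ at a fixed time is available only for times outside $N$. This is why both the good time $\ts$ and the target time $t$ must lie in $(0,\infty)\sm N$, with uniform convergence (\ref{51.87}) applied at each of them. The positive-measure argument guarantees that such a $\ts$ can be chosen. A minor point is that $\|\Theta(\cdot,\sigma)-\Tinf\|_{L^\infty(\Om)}$ must be measurable in $\sigma$; this is implicit in (\ref{67.1}).
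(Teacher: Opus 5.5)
Your proof is correct and follows the paper's argument. You extract a good time $\ts\notin N$ from (\ref{67.1}) in a short window before the target time, transfer it to $\Teps$ via (\ref{51.87}), propagate with Lemma \ref{lem61} at $a=\Tinf$, and return to $\Theta$ at the target time outside $N$. The paper phrases this by contradiction along a sequence $(t_k)$ where you argue directly, and you should take $T_0\ge 2$ so that $\ts>1$ is actually guaranteed.
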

\proof
  We suppose that we could find $c_1>0$ and $(t_k)_{k\in\N}\subset (2,\infty)\sm N$ such that $t_k\to\infty$ as $k\to\infty$, but that
  \be{68.2}
	\|\Theta(\cdot,t_k)-\Tinf\|_{L^\infty(\Om)} \ge c_1
	\qquad \mbox{for all } k\in\N,
  \ee
  and to derive a contradiction from this, we employ Lemma \ref{lem61} to fix $b>0$ and $c_2>0$ with the property that if
  $\eps\in (0,1)$ and $\ts>1$ are such that $\|\Teps(\cdot,\ts)-\Tinf\|_{L^\infty(\Om)}^\frac{1}{b}\le \frac{1}{2}$, then
  \be{68.3}
	\|\Teps(\cdot,t)-\Tinf\|_{L^\infty(\Om)} \le c_2\|\Teps(\cdot,\ts)-\Tinf\|_{L^\infty(\Om)} + c_2 (t-\ts)^b
	\qquad \mbox{for all $t\in (\ts,\ts+\frac{1}{2})$.}
  \ee
  We thereupon let $\tau\in (0,\frac{1}{2})$ be such that
  \be{68.4}
	c_2 \tau^b \le \frac{c_1}{4},
  \ee
  take $\eta>0$ such that both
  \be{68.5}
	\Big(\frac{2\eta}{\tau}\Big)^\frac{1}{b} \le \frac{1}{2}
	\qquad \mbox{and} \qquad 
	c_2 \cdot \frac{2\eta}{\tau} \le \frac{c_1}{4},
  \ee
  and invoke Lemma \ref{lem67} to find $k\in\N$ such that
  \bas
	\int_{t_k-\tau}^{t_k} \|\Theta(\cdot,t)-\Tinf\|_{L^\infty(\Om)} dt \le \eta.
  \eas
  As $N$ is a null set, this ensures the existence of $\ts\in (t_k-\tau,t_k)\sm N$ such that
  $\|\Theta(\cdot,\ts)-\Tinf\|_{L^\infty(\Om)} \le \frac{\eta}{\tau}$,
  so that since $\ts\not\in\N$, we may draw on (\ref{51.87}) to fix $\epss\in (0,1)$ such that 
  \be{68.6}
	\|\Teps(\cdot,\ts)-\Tinf\|_{L^\infty(\Om)}
	\le \frac{2\eta}{\tau}
	\qquad \mbox{for all } \eps\in (0,\epss)\cap (\eps_j)_{j\in\N}.
  \ee
  But due to the first restriction in (\ref{68.5}), we may apply (\ref{68.3}) here to see that thus
  \bas
	\|\Teps(\cdot,t)-\Tinf\|_{L^\infty(\Om)}
	\le c_2 \|\Teps(\cdot,\ts)-\Tinf\|_{L^\infty(\Om)}
	+ c_2(t-\ts)^b
	\qquad \mbox{for all $t\in (\ts,\ts+\frac{1}{2})$,}
  \eas
  which since $t_k<\ts+\tau<\ts+\frac{1}{2}$ particularly warrants that as a consequence of (\ref{68.6}), the second inequality
  in (\ref{68.5}) and (\ref{68.4}),
  \bas
	\|\Teps(\cdot,t_k)-\Tinf\|_{L^\infty(\Om)}
	\le c_2 \cdot \frac{2\eta}{\tau} + c_2 (t_k-\ts)^b
	\le c_2 \cdot \frac{2\eta}{\tau} + c_2 \tau^b
	\le \frac{c_1}{4}+\frac{c_1}{4} 
	= \frac{c_1}{2}
  \eas
  for all $\eps\in (0,\epss)\cap (\eps_j)_{j\in\N}$.
  As also $t_k\not\in N$, again using (\ref{51.87}) we can take $\eps=\eps_j\searrow 0$ here to obtain that
  \bas
	\|\Theta(\cdot,t_k)-\Tinf\|_{L^\infty(\Om)} \le \frac{c_1}{2},
  \eas
  which indeed is incompatible with (\ref{68.2}).
\qed
\mysection{Decay of $u$. Proof of Theorem \ref{theo18}}
Our strategy toward asserting large-time extinction of $u$ will now go back to (\ref{0zeps}) and attempt to appropriately combine
(\ref{67.111}) with Lemma \ref{lem68} in order to show that the driving source $\vepsx$ appearing therein 
must asymptotically vanish in a suitable sense.\abs
Here, the ambition to conclude from (\ref{67.111}) that also the corresponding expressions $\zepsx$ 
undergo some large-time decay motivates the following lemma 
which utilizes appropriately regularized relatives of $\Theta$ as test functions in (\ref{wt}) to turn
the weak approximation property in (\ref{51.9}) into a statement on strong convergence:
\begin{lem}\label{lem65}
  Let $\tau>0$ and $T>\tau$, and let $(\Teps)_{\eps\in (0,1)}$ and $(\eps_j)_{j\in\N}$ be as in Lemma \ref{lem0} and
  Lemma \ref{lem51}. Then
  \be{65.1}
	\Tepsx \to \Theta_x
	\quad \mbox{in } L^2(\Om\times (\tau,T))
	\qquad \mbox{as } \eps=\eps_j\searrow 0.
  \ee
\end{lem}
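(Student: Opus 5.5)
The plan is to upgrade the weak convergence (\ref{51.9}) to strong convergence by showing convergence of norms. Specifically, I will prove that
\[
\limsup_{\eps=\eps_j\searrow 0}\int_0^\infty\io \zeta\Tepsx^2 \le \int_0^\infty\io\zeta\Theta_x^2 ,
\]
where $\zeta\in C_0^\infty((\tau/2,T+1))$ is a cutoff with $0\le\zeta\le1$ and $\zeta\equiv1$ on $(\tau,T)$. Since $\sqrt\zeta\,\Tepsx\wto\sqrt\zeta\,\Theta_x$ in $L^2(\Om\times(0,\infty))$, the Hilbert space argument then yields strong convergence of $\sqrt\zeta\,\Tepsx$, and hence (\ref{65.1}).

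\textbf{Step 1 (identity for the approximations).} I test the third equation in (\ref{0eps}) by $\zeta\Teps$. Integrating by parts in the term $-\feps(\Teps)\vepsx\Teps$, with $\veps=0$ on $\pO$, gives
\[
\int_0^\infty\io\zeta\Tepsx^2=\frac12\int_0^\infty\io\zeta'\Teps^2+\int_0^\infty\io\zeta\big(\feps'(\Teps)\Teps+\feps(\Teps)\big)\Tepsx\veps .
\]
By (\ref{36.1}) and (\ref{36.2}) applied with $\tau/2$ in place of $\tau$, the functions $\Teps$ are bounded above and away from zero on $\Om\times(\tau/2,\infty)$ for small $\eps$. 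Together with (\ref{51.8}) and dominated convergence, this gives $\Teps^2\to\Theta^2$ in $L^1$ on $\supp\zeta$. Moreover, $\feps'(\Teps)\Teps+\feps(\Teps)$ is uniformly bounded by (\ref{fe1}) and converges a.e.\ to $f'(\Theta)\Theta+f(\Theta)$ by (\ref{fe2}). Combining this with the strong convergence $\veps\to u_t$ in (\ref{51.7}), the product $\zeta(\feps'\Teps+\feps)\veps$ converges strongly in $L^2$. Paired with the weak convergence (\ref{51.9}), the right-hand side therefore converges, and
\[
\lim_{\eps=\eps_j\searrow0}\int_0^\infty\io\zeta\Tepsx^2=\frac12\int_0^\infty\io\zeta'\Theta^2+\int_0^\infty\io\zeta\big(f'(\Theta)\Theta+f(\Theta)\big)\Theta_x u_t .
\]

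\textbf{Step 2 (the same identity for the limit; main obstacle).} It remains to show that the right-hand side above equals $\int_0^\infty\io\zeta\Theta_x^2$. For this I intend to use in (\ref{wt}) the test function $\vp:=\zeta\,\Theta_h$, where $\Theta_h(\cdot,t):=\frac1h\int_t^{t+h}\Theta(\cdot,s)\,ds$ is the Steklov average, and then let $h\searrow0$. The first task is to justify $\vp$ as a test function. By (\ref{17.1}) and (\ref{17.6}) on $(\tau/2,\infty)$, all integrands in (\ref{wt}) lie in $L^1$ once $\vp\in L^\infty\cap L^2(W^{1,2})$ with $\vp_t\in L^1$ and with support in $\Om\times(\tau/2,T+1)$. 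Indeed, $\Theta_h$ is bounded, lies in $L^2(W^{1,2})$ and is Lipschitz in time with values in $L^\infty$. Hence a mollification and density argument extends (\ref{wt}) to such $\vp$.

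\textbf{Step 3 (passing $h\searrow0$).} The time-derivative term is handled by the standard Steklov manipulation. Writing $\pa_t\Theta_h=\frac1h(\Theta(\cdot,t+h)-\Theta(\cdot,t))$ and shifting variables, one obtains
\[
-\int_0^\infty\io\Theta\,\pa_t(\zeta\Theta_h)\to-\frac12\int_0^\infty\io\zeta'\Theta^2
\]
up to the sign convention. The remaining terms converge because $\Theta_h\to\Theta$ in $L^2(W^{1,2})$ and boundedly a.e. Together these give
\[
\int_0^\infty\io\zeta\Theta_x^2=\frac12\int_0^\infty\io\zeta'\Theta^2+\int_0^\infty\io\zeta f'(\Theta)\Theta_x u_t\Theta+\int_0^\infty\io f(\Theta)u_t(\zeta\Theta)_x ,
\]
which is exactly the limit obtained in Step 1. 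This closes the norm comparison. I expect the main difficulty to lie in the careful bookkeeping of the Steklov/time-derivative term, where $\Theta$ is only weakly continuous in $L^1$ in time, and in the extension of (\ref{wt}) to nonsmooth test functions.
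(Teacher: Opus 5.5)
Your overall strategy is the paper's: an energy identity for the approximations, an energy relation for the limit obtained by testing (\ref{wt}) with a Steklov-regularized $\Theta$, and then weak convergence plus convergence of norms. Your Step~1 is fine. The gap is in Step~3.

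With the forward average $\Theta_h(\cdot,t)=\frac1h\int_t^{t+h}\Theta(\cdot,s)\,ds$, write $\Theta(t)\big(\Theta(t+h)-\Theta(t)\big)=\frac12\big(\Theta^2(t+h)-\Theta^2(t)\big)-\frac12\big(\Theta(t+h)-\Theta(t)\big)^2$ and shift variables. This gives
\[
-\int_0^\infty\!\!\io\Theta\,\pa_t(\zeta\Theta_h)
=-\int_0^\infty\!\!\io\zeta'\Theta\Theta_h
+\frac12\int_0^\infty\!\!\io\frac{\zeta(t)-\zeta(t-h)}{h}\,\Theta^2
+R_h ,
\]
where $R_h:=\frac{1}{2h}\int_0^\infty\io\zeta(t)\big(\Theta(x,t+h)-\Theta(x,t)\big)^2dx\,dt\ge 0$.

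Your claimed limit tacitly assumes $R_h\to 0$. That is essentially the energy identity for the limit you are trying to prove. Shifting variables does not produce it, and it is not a matter of sign convention. If you simply discard $R_h\ge 0$, you only get
\[
\int_0^\infty\!\!\io\zeta\Theta_x^2\le\frac12\int_0^\infty\!\!\io\zeta'\Theta^2+\int_0^\infty\!\!\io\zeta\big(f'(\Theta)\Theta+f(\Theta)\big)\Theta_x u_t .
\]
By your Step~1, this is exactly what weak lower semicontinuity already gives, so the norm comparison does not close.

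The paper avoids this by using the backward average $S_h\Theta=\frac1h\int_{t-h}^t\Theta$. The corresponding remainder is then $-\frac1{2h}\int_0^\infty\io\zeta\big(\Theta(\cdot,t)-\Theta(\cdot,t-h)\big)^2\le 0$. It can be dropped, and letting $h\searrow 0$ yields the reverse one-sided estimate
\[
\int_0^\infty\!\!\io\zeta\Theta_x^2\ge\frac12\int_0^\infty\!\!\io\zeta'\Theta^2+\int_0^\infty\!\!\io\zeta\big(f'(\Theta)\Theta+f(\Theta)\big)\Theta_x u_t .
\]
Only this inequality is needed: combined with Step~1 and lower semicontinuity, it forces convergence of norms. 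The paper does this with a piecewise linear cutoff and Lebesgue points, but your smooth $\zeta$ works equally well.

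Alternatively, you could keep the forward average and actually prove $R_h\to 0$. From (\ref{wt}), (\ref{17.6}), $u_t\in L^\infty((0,\infty);L^2(\Om))$ and $W^{1,2}(\Om)\hookrightarrow L^\infty(\Om)$, one obtains $\Theta_t\in L^2_{loc}((0,\infty);(W^{1,2}(\Om))^\star)$. Then
\[
\|\Theta(\cdot,t+h)-\Theta(\cdot,t)\|_{L^2(\Om)}^2\le\Big\|\int_t^{t+h}\Theta_t\Big\|_{(W^{1,2}(\Om))^\star}\|\Theta(\cdot,t+h)-\Theta(\cdot,t)\|_{W^{1,2}(\Om)} .
\]
Together with continuity of translations in $L^2_{loc}((0,\infty);W^{1,2}(\Om))$, this gives $R_h\to 0$. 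As written, neither device is present, so Step~3 does not deliver the needed inequality.
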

\proof
  We once more rely on (\ref{51.87}) and moreover use that $\Theta\in L^2_{loc}(\bom\times (0,\infty))$ in choosing a null set
  $N_1 \subset (0,\infty)$ such that
  \be{65.01}
	\Teps(\cdot,t)\to \Theta(\cdot,t)
	\quad \mbox{in } L^2(\Om)
	\quad \mbox{for all } t\in (0,\infty)\sm N_1
	\qquad \mbox{as } \eps=\eps_j\searrow 0,
  \ee
  and that each $t\in (0,\infty)\sm N_1$ is a Lebesgue point of $0<t\mapsto \io \Theta^2(\cdot,t)$,
  and fixing any $\ts\in (0,\tau)\sm N_1$, $t_0\in (T,\infty)\sm N_1$ and $\del\in (0,\ts-\del)$ we let 
  \be{zd}
	\zd(t):=\lball
	0, & t\in (-\infty),\ts-\del), \\[1mm]
	\frac{t-\ts+\del}{\del}, \qquad 
	& t\in [\ts-\del,\ts), \\[1mm]
	1, & t\in [\ts,t_0), \\[1mm]
	\frac{\ts+\del-t}{\del}, \qquad 
	& t\in [t_0,t_0+\del), \\[1mm]
	0, & t\ge t_0+\del.
	\ear
  \ee
  For $h>0$, we furthermore write
  \be{Sh}
	[S_h \phi](x,t):=\frac{1}{h} \int_{t-h}^t \phi(x,s) ds,
	\qquad (x,t)\in \Om\times (0,\infty), \ \phi\in L^1_{loc}(\Om\times\R),
  \ee
  and note that, as can readily be verified (\cite{dibenedetto}), for each $T>0$ and any $\phi\in L^2(\Om\times\R)$
  it follows that
  \be{S1}
	S_h \phi \wto \phi
	\quad \mbox{in } L^2(\Om\times (0,T))
	\qquad \mbox{as } h\searrow 0,
  \ee
  while given any $\phi\in L^\infty(\Om\times \R)$, we have
  \be{S2}
	S_h \phi \wsto \phi
	\quad \mbox{in } L^\infty(\Om\times (0,T))
	\qquad \mbox{as } h\searrow 0.
  \ee
  Then for any $\del\in (0,\ts-\del)$, (\ref{zd}) ensures that 
  $\zd\in W^{1,\infty}(\R)$ with $\zd'\equiv 0$ on $(-\infty,\ts-\del)$ and on $(\ts,t_0)\cup (t_0+\del,\infty)$,  that
  $\zd'\equiv \frac{1}{\del}$ on $(\ts-\del,\ts)$, and that $\zd'\equiv -\frac{1}{\del}$ on $(t_0,t_0+\del)$,
  whence after trivially extending $\Theta$ by letting $\Theta(x,t):=0$ for $(x,t)\in\Om\times (-\infty,0)$, 
  in view of the inclusions $\Theta\in L^\infty_{loc}(\bom\times (0,\infty))$
  and $\Theta_x\in L^2_{loc}(\bom\times (0,\infty))$
  we infer from a standard approximation argument
  that we may use $\vp(x,t):=\zd(t) \cdot [S_h\Theta](x,t)$, $(x,t)\in \bom\times [0,\infty)$, $\del\in (0,\ts)$, 
  $h\in (0,\ts-\del)$, as a test function in (\ref{wt}).
  We thereby see that
  \bea{65.2}
	& & \hs{-10mm}
	- \frac{1}{\del} \int_{\ts-\del}^{\ts} \io \Theta S_h \Theta
	+ \frac{1}{\del} \int_{t_0}^{t_0+\del} \io \Theta S_h \Theta
	- \int_0^\infty \io \zd(t) \Theta(x,t) \cdot \frac{\Theta(x,t)-\Theta(x,t-h)}{h} dxdt \nn\\
	&=& - \int_0^\infty \io \zd(t) \Theta_x(x,t) [S_h \Theta_x](x,t) dxdt
	+ \int_0^\infty \io \zd(t) f'(\Theta(x,t)) \Theta_x(x,t) u_t(x,t) [S_h \Theta](x,t) dxdt \nn\\
	& & + \int_0^\infty \io \zd(t) f(\Theta(x,t)) u_t(x,t) [S_h \Theta_x](x,t) dxdt
	\qquad \mbox{for all $\del\in (0,\ts)$ and $h\in (0,\ts-\del)$},
  \eea
  where by Young's inequality and a linear substitution in the time variable,
  \bas
	& & \hs{-30mm}
	- \int_0^\infty \io \zd(t) \Theta(x,t) \cdot \frac{\Theta(x,t)-\Theta(x,t-h)}{h} dxdt \nn\\ 
	&\le& - \frac{1}{2h} \int_0^\infty \io \zd(t) \Theta^2(x,t) dxdt
	+ \frac{1}{2h} \int_0^\infty \io \zd(t) \Theta^2(x,t-h) dxdt \\
	&=& \frac{1}{2} \int_0^\infty \io \frac{\zd(t+h)-\zd(t)}{h} \cdot \Theta^2(x,t) dxdt \\
	&\to& \frac{1}{2} \int_0^\infty \io \zd'(t) \Theta^2(x,t) dxdt \\
	&=&  \frac{1}{2\del} \int_{\ts-\del}^{\ts} \io \Theta^2
	- \frac{1}{2\del} \int_{t_0}^{t_0+\del} \io \Theta^2
	\qquad \mbox{as } h\searrow 0.
  \eas
  Since $S_h \Theta \wsto \Theta$ in $L^\infty_{loc}(\bom\times (0,\infty))$ and $S_h \Theta_x \wto \Theta_x$ 
  in $L^2_{loc}(\bom\times (0,\infty))$ as $h\searrow 0$ by (\ref{S2}) and (\ref{S1}), 
  additionally using that $u_t\in L^2_{loc}(\bom\times (0,\infty))$
  we thus conclude on letting $h\searrow 0$ in (\ref{65.2}) that
  \bas
	\hs{-8mm}
	\int_0^\infty \io \zd(t) \Theta_x^2(x,t) dxdt
	&\ge& \frac{1}{2\del} \int_{\ts-\del}^{\ts} \io \Theta^2
	- \frac{1}{2\del} \int_{t_0}^{t_0+\del} \io \Theta^2 \nn\\
	& & + \int_0^\infty \io \zd(t) f'(\Theta(x,t)) \Theta(x,t) \Theta_x(x,t) u_t(x,t) dxdt \nn\\
	& & + \int_0^\infty \io \zd(t) f(\Theta(x,t)) \Theta_x(x,t) u_t(x,t) dxdt
	\quad \mbox{for all } \del\in (0,\ts),
  \eas
  which, according to (\ref{zd}) and the Lebesgue point properties of $\ts$ and $t_0$, in the limit $\del\searrow 0$ yields
  \be{65.3}
	\int_{\ts}^{t_0} \io \Theta_x^2
	\ge \frac{1}{2} \io \Theta^2(\cdot,\ts) - \frac{1}{2} \io \Theta^2(\cdot,t_0)
	+ \int_{\ts}^{t_0} \io f'(\Theta)\Theta \Theta_x u_t
	+ \int_{\ts}^{t_0} \io f(\Theta) \Theta_x u_t.
  \ee
  On the other hand, our choices of $\ts$ and $t_0$ moreover provide access to (\ref{65.01}), meaning that as $\eps=\eps_j\searrow 0$
  we have
  \bas
	\frac{1}{2} \io \Teps^2(\cdot,\ts)
	- \frac{1}{2} \io \Teps^2(\cdot,t_0)
	\to \frac{1}{2} \io \Theta^2(\cdot,\ts) - \frac{1}{2} \io \Theta^2(\cdot,t_0).
  \eas
  Apart from that, using (\ref{36.1}) together with (\ref{fe1})
  and the fact that $\ts>0$ we find $c_1>0$ and $\epss\in (0,1)$ such that
  \bas
	\feps'(\Teps) \Teps \le c_1
	\quad \mbox{and} \quad
	\feps(\Teps) \le c_1
	\quad \mbox{in } \Om\times (\ts,t_0)
	\qquad \mbox{for all } \eps\in (0,\epss),
  \eas
  which in combination with (\ref{fe2}) and the strong approximation feature in (\ref{51.7}) guarantees that thanks to
  the dominated convergence theorem
  \bas
	\feps'(\Teps) \Teps \veps
	\to f'(\Theta) \Theta u_t
	\quad \mbox{and} \quad
	\feps(\Teps) \veps
	\to f(\Theta) u_t
	\qquad \mbox{in } L^2(\Om\times (\ts,t_0))
  \eas
  as $\eps=\eps_j\searrow 0$.
  Therefore, the property in (\ref{51.9}) ensures that
  \bas
	& & \hs{-30mm}
	\frac{1}{2} \io \Teps^2(\cdot,\ts)
	- \frac{1}{2} \io \Teps^2(\cdot,t_0)
	+ \int_{\ts}^{t_0} \io \feps'(\Teps) \Teps \Tepsx \veps
	+ \int_{\ts}^{t_0} \io \feps(\Teps) \Tepsx \veps \nn\\
	&\to& \frac{1}{2} \io \Theta^2(\cdot,\ts) - \frac{1}{2} \io \Theta^2(\cdot,t_0)
	+ \int_{\ts}^{t_0} \io f'(\Theta) \Theta_x u_t
	+ \int_{\ts}^{t_0} \io f(\Theta) \Theta_x u_t
  \eas
  as $\eps=\eps_j\searrow 0$, and that again due to lower semicontinuity of $L^2$ norms with respect to weak convergence, also
  \bas
	\int_{\ts}^{t_0} \io \Theta_x^2 \le \liminf_{\eps=\eps_j\searrow 0} \int_{\ts}^{t_0} \io \Tepsx^2.
  \eas
  From (\ref{65.3}) it consequently follows that, in fact,
  \bas
	\int_{\ts}^{t_0} \io \Tepsx^2 \to \int_{\ts}^{t_0} \io \Theta_x^2
	\qquad \mbox{as } \eps=\eps_j\searrow 0,
  \eas
  and that thus, again by (\ref{51.9}), the claim results due to the inequalities $\ts<\tau<T<t_0$.
\qed
When utilizing the above in the intended direction, we will make use of
the following auxiliary statement, of which we include a brief proof for completeness.
\begin{lem}\label{lem42}
  There exists $C>0$ such that
  \be{42.1}
	\|\psi\|_{L^2(\Om)} \le C\|\psi_x\|_{(W^{1,2}(\Om))^\star}
	\qquad \mbox{for all } \psi\in C^1(\bom).
  \ee
\end{lem}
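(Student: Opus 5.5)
Throughout, $\psi_x$ is understood as the functional on $W^{1,2}(\Om)$ given by $\langle \psi_x,\vp\rangle := -\io \psi\vp_x$. This is the distributional derivative tested against $W^{1,2}$ functions without boundary restriction, so no boundary condition on $\psi$ is needed. Accordingly
\bas
	\|\psi_x\|_{(W^{1,2}(\Om))^\star} = \sup\Big\{ \io \psi\vp_x \ : \ \vp\in W^{1,2}(\Om), \ \|\vp\|_{W^{1,2}(\Om)}\le 1\Big\}.
\eas
The plan is to choose one test function $\vp$, depending on $\psi$, whose derivative is exactly $\psi$. Writing $\Om=(A,B)$, I let
\bas
	\vp(x):=\int_A^x \psi(y)\,dy, \qquad x\in\bom.
\eas
Then $\vp\in C^2(\bom)\subset W^{1,2}(\Om)$ and $\vp_x=\psi$. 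Nothing forces $\vp$ to vanish at $B$, but this is harmless, since the test functions are arbitrary elements of $W^{1,2}(\Om)$. This is precisely why the claim holds for all $\psi\in C^1(\bom)$ and not only for those vanishing on $\pO$.

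Next I bound $\|\vp\|_{W^{1,2}(\Om)}$ by $\|\psi\|_{L^2(\Om)}$. Trivially $\|\vp_x\|_{L^2(\Om)}=\|\psi\|_{L^2(\Om)}$. The Cauchy--Schwarz inequality gives $|\vp(x)|\le |\Om|^{1/2}\|\psi\|_{L^2(\Om)}$ for all $x$, and hence $\|\vp\|_{L^2(\Om)}\le |\Om|\,\|\psi\|_{L^2(\Om)}$. Altogether
\bas
	\|\vp\|_{W^{1,2}(\Om)}\le c_1\|\psi\|_{L^2(\Om)}, \qquad c_1:=(1+|\Om|^2)^{1/2}.
\eas

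Finally, testing the functional with this $\vp$ gives
\bas
	\|\psi\|_{L^2(\Om)}^2=\io\psi\vp_x=-\langle\psi_x,\vp\rangle\le \|\psi_x\|_{(W^{1,2}(\Om))^\star}\,c_1\|\psi\|_{L^2(\Om)}.
\eas
If $\psi\not\equiv 0$, dividing by $\|\psi\|_{L^2(\Om)}$ yields (\ref{42.1}) with $C:=c_1$; if $\psi\equiv 0$ there is nothing to prove.

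There is no real obstacle in this argument. The only point requiring care is the interpretation of $\psi_x$ as an element of the dual, fixed at the outset. With the reading $\vp\mapsto\io\psi_x\vp$, integration by parts would produce boundary terms $\psi\vp\big|_{\pO}$, and the antiderivative trick would no longer work directly.
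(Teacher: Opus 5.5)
Your argument is correct and is essentially the paper's proof. The paper also tests with an antiderivative, namely $\chi_0(x)=\int_0^x\chi$ for arbitrary $\chi\in C_0^\infty(\Om)$. It bounds $\|\chi_0\|_{W^{1,2}(\Om)}^2\le(|\Om|^2+1)\io\chi^2$ by Cauchy--Schwarz and then takes the supremum over $\|\chi\|_{L^2(\Om)}\le 1$. You instead take the antiderivative of $\psi$ itself, which gives the same constant.

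Your explicit reading $\langle\psi_x,\vp\rangle=-\io\psi\vp_x$ is exactly what makes the paper's ``integration by parts'' $\io\psi\chi_{0x}=-\io\psi_x\chi_0$ free of the boundary term $\psi(|\Om|)\io\chi$. Under the pointwise reading the lemma would fail for constant $\psi$. In the paper's only application, $\psi=\ueps(\cdot,t)-\ueps(\cdot,t_k)$ vanishes on $\pO$, so both readings coincide there.
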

\proof
  For definiteness assuming that $\Om=(0,|\Om|)$ and $\|\chi_0\|_{W^{1,2}(\Om)}^2 = \io \chi_0^2 + \io \chi_{0x}^2$
  for $\chi_0\in W^{1,2}(\Om)$, given $\psi\in C^1(\bom)$ and $\chi\in C_0^\infty(\Om)$ we let $\chi_0(x):=\int_0^x \chi(x')dx'$,
  $x\in\bom$, and then obtain from the Cauchy-Schwarz inequality that $\chi_0^2 \le |\Om| \io \chi^2$ in $\Om$ and hence
  $\|\chi_0\|_{W^{1,2}(\Om)}^2 \le c_1\io \chi^2$ with $c_1:=|\Om|^2+1$. Therefore, an integration by parts shows that
  \bas
	\io \psi\chi
	= \io \psi \chi_{0x}
	= - \io \psi_x \chi_0
	\le \|\psi_x\|_{(W^{1,2}(\Om))^\star} \|\chi_0\|_{W^{1,2}(\Om)}
	\le \sqrt{c_1} \|\psi_x\|_{(W^{1,2}(\Om))^\star} \|\chi\|_{L^2(\Om)}
  \eas
  for any such $\chi$, so that
  \bas
	\|\psi\|_{L^2(\Om)}
	= \sup_{\begin{array}{cc} 
	\scriptstyle \chi\in C_0^\infty(\Om) \\[-1.5mm] 
	\scriptstyle 	\|\chi\|_{L^2(\Om)} \le 1 
	\end{array} } 
	\io \psi \chi
	\le \sqrt{c_1} \|\psi_x\|_{(W^{1,2}(\Om))^\star},
  \eas
  as claimed.
\qed
As announced, 
the second core step in our large time analysis now explicitly returns to (\ref{0zeps}) and the first two equations in (\ref{0eps}).
While the former will show that due to Lemma \ref{lem67} the averaged deviations 
$\int_{t_k}^{t_k+1} \|u(\cdot,t)-u(\cdot,t_k)\|_{L^2(\Om)}^2 dt$ must decay as $k\to\infty$ whenever $t_k\to\infty$,
the latter in conjunction with Lemma \ref{lem68} will identify all conceivable $\omega$-limits of $u$ as necessarily trivial.
In summary, this will yield the following.
\begin{lem}\label{lem43}
  If $u$ is as found in Lemma \ref{lem51}, then
  \be{43.1}
	u(\cdot,t) \to 0
	\quad \mbox{in } L^\infty(\Om)
	\qquad \mbox{as } t\to\infty.
  \ee
\end{lem}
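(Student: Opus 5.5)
We argue by contradiction, following the strategy sketched in the introduction. Suppose (\ref{43.1}) fails. By (\ref{17.1}), $u$ is bounded in $L^\infty((0,\infty);W_0^{1,2}(\Om))$ and $u_t$ is bounded in $L^\infty((0,\infty);L^2(\Om))$. Hence $u$ is uniformly Lipschitz in time with values in $L^2(\Om)$, and the trajectory $\{u(\cdot,t)\}$ is relatively compact in $C^0(\bom)$ by the compact embedding $W_0^{1,2}(\Om)\hra C^0(\bom)$. We may therefore pick $t_k\to\infty$ with $u(\cdot,t_k)\to u_\star$ in $L^\infty(\Om)$ and $u(\cdot,t_k)\wto u_\star$ in $W_0^{1,2}(\Om)$, where $u_\star\not\equiv 0$. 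The aim is to show $u_\star\equiv 0$.

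\textbf{Step 1: $u$ is asymptotically frozen on unit intervals.} Set $w_\eps:=\zeps-\uepsx$. From (\ref{0zeps}) and the second equation in (\ref{0eps}) we get
\[
w_{\eps t}=\zepsxx-\feps'(\Teps)\zepsx^2-\eps\uepsxxx .
\]
For $\phi\in W^{1,2}(\Om)$ with $\|\phi\|_{W^{1,2}}\le 1$, integrating in time over $(t_k,t)$ gives
\[
\io \big(w_\eps(\cdot,t)-w_\eps(\cdot,t_k)\big)\phi=\int_{t_k}^t\io\Big(-\zepsx\phi_x-\feps'(\Teps)\zepsx^2\phi+\eps\uepsxx\phi_x\Big).
\]
Boundary terms vanish because $\zepsx$ need not vanish but $\zeps=0$ on $\pO$; more carefully, one tests with $\phi$ and handles $\zepsx\phi$ at the boundary by using the Neumann condition $\Tepsx=0$, which gives $\zepsx=0$ on $\pO$. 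Also $\uepsxx=0$ on $\pO$ thanks to $\ueps=\veps=0$ there.

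By (\ref{67.2}) the middle term is controlled by $C\|\phi\|_{L^\infty}\int_{t_k}^{t_k+1}\io\zepsx^2$, and the last term tends to zero as $\eps\searrow0$ by (\ref{2.5}). We now pass to the limit $\eps=\eps_j\searrow0$. Lemma \ref{lem65} gives $\zepsx\to z_x$ strongly in $L^2$, which is exactly needed for the quadratic term. Moreover $\ueps\to u$ and $\Teps\to\Theta$ in the appropriate sense by Lemma \ref{lem51}, so $\uepsx$ converges in $(W^{1,2})^\star$ at a.e.\ time, in a form adapted to $u(\cdot,t_k)$, which is justified by (\ref{51.5}). The result is
\[
\int_{t_k}^{t_k+1}\|u_x(\cdot,t)-u_x(\cdot,t_k)\|_{(W^{1,2})^\star}^2dt
\le C\int_{t_k}^{t_k+1}\|z(\cdot,t)-z(\cdot,t_k)\|_{L^2}^2
+C\Big(\int_{t_k}^{t_k+2}\io z_x^2\Big)^{2}+C\int_{t_k}^{t_k+2}\io z_x^2 .
\]
The first term on the right tends to zero by Lemma \ref{lem68} (uniform convergence $z\to\ell(\Tinf)$ off $N$; $t_k$ may be perturbed slightly to avoid $N$), and the others tend to zero by (\ref{67.111}). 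Lemma \ref{lem42} then yields $\int_{t_k}^{t_k+1}\|u(\cdot,t)-u(\cdot,t_k)\|_{L^2}^2\to0$. Combined with the equicontinuity, this gives $u(\cdot,t_k+s)\to u_\star$ for $s\in(0,1)$ in an averaged $L^2$ sense.

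\textbf{Step 2: identify $u_\star$.} Test (\ref{wu}) with $\vp(x,t)=\zeta(t-t_k)\psi(x)$, where $\zeta\in C_0^\infty((0,1))$ and $\psi\in C_0^\infty(\Om)$. This gives
\[
\int\!\!\io u\,\zeta''\psi=-\int\!\!\io u_x\zeta\psi_x+\int\!\!\io f(\Theta)\zeta\psi_x ,
\]
after rewriting $f'(\Theta)\Theta_x=(f(\Theta))_x$ and integrating by parts. As $k\to\infty$, Step 1 lets us replace $u$ by $u_\star$, which makes the left side $u_\star$-times $\int\zeta''=0$. The term $u_x$ converges weakly to $u_{\star x}$. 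Lemma \ref{lem68} gives $f(\Theta)\to f(\Tinf)$ uniformly, and its contribution $\io f(\Tinf)\psi_x$ vanishes. Hence $\int\zeta\cdot\io u_{\star x}\psi_x=0$ for all $\psi$, so $u_\star\in W_0^{1,2}(\Om)$ is weakly harmonic and therefore $u_\star\equiv0$, a contradiction.

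The main obstacle is Step 1: making the duality estimate rigorous in the limit $\eps\searrow0$. In particular, the quadratic term $\feps'(\Teps)\zepsx^2$ requires the strong convergence of Lemma \ref{lem65}, and the endpoint value at $t_k$ must be handled at the level of $u$ rather than $\uepsx$. I would first try to derive the estimate for fixed $\eps$ and pass to the limit only in the time-integrated $L^2$ form for $u$, using Lemma \ref{lem42} already at the $\eps$-level together with (\ref{51.5}).
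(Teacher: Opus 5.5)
Your proposal is correct and follows the paper's proof essentially step by step. It relies on the same ingredients: relative compactness of $(u(\cdot,t))_{t>0}$ in $C^0(\bom)$, the $(W^{1,2}(\Om))^\star$ estimate for $\pa_t(\zeps-\uepsx)$ combined with Lemma \ref{lem42} at the $\eps$-level, passage to the limit via Lemma \ref{lem65}, (\ref{51.5}) and (\ref{51.87}), and testing the mechanical equation with $\zeta(t-t_k)\chi(x)$. The one deviation is that in Step 2 you test the limit weak formulation (\ref{wu}) directly, using $f'(\Theta)\Theta_x=(f(\Theta))_x$, rather than returning to the approximate problems as the paper does; this is a legitimate shortcut because $(u,\Theta)$ is a weak solution in the sense of Definition \ref{dw}.
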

\proof
  Since
  from the inclusion $u\in C^0(\bom\times [0,\infty)) \cap L^\infty((0,\infty);W^{1,2}(\Om))$ and the compactness of the embedding
  $W^{1,2}(\Om)\hra C^0(\bom)$ it readily follows that $(u(\cdot,t))_{t>0}$ is relatively compact in $C^0(\bom)$,
  and since the set $N$ found in Lemma \ref{lem0} satisfies $|N|=0$,
  it is sufficient to make sure that whenever $\uinf\in C^0(\bom)$ and $(t_k)_{k\in\N} \subset (0,\infty)\sm N$ are such that
  \be{43.2}
	u(\cdot,t_k) \to \uinf
	\quad \mbox{in } L^\infty(\Om)
  \ee
  as $k\to\infty$, we have $\uinf\equiv 0$.
  To this end, in a first step we combine (\ref{0eps}) with (\ref{0zeps}) to see, noting that both $u_{\eps xt}$ and
  $\uepsxxx$ are continuous in $\bom\times [0,\infty)$ for all $\eps\in (0,1)$ by parabolic regularity theory (\cite{LSU}), that
  with $(\zeps)_{\eps\in (0,1)}$ as in (\ref{zeps}),
  \bas
	\pa_t (\zeps-\uepsx)
	= \zepsxx - \feps'(\Teps) \zepsx^2 - \eps\uepsxxx
	\quad \mbox{in } \Om\times (0,\infty)
	\qquad \mbox{for all } \eps\in (0,1),
  \eas
  which implies that if we let $c_1>0$ be such that $\|\psi\|_{L^\infty(\Om)} + \|\psi_x\|_{L^2(\Om)} \le c_1$ for
  all $\psi\in C^1(\bom)$ fulfilling $\|\psi\|_{W^{1,2}(\Om)} \le 1$, then for any such $\psi$ we have
  \bas
	\bigg| \io \pa_t (\zeps-\uepsx) \psi\bigg|
	&=& \bigg| - \io \zepsx \psi_x
	- \io \feps'(\Teps) \zepsx^2 \psi
	+ \eps \io \uepsxx \psi_x \bigg| \\
	&\le& c_1 \|\zepsx\|_{L^2(\Om)} + 2c_1\Kf \|\zepsx\|_{L^2(\Om)}^2 + c_1 \eps\|\uepsxx\|_{L^2(\Om)}
	\qquad \mbox{for all $t>0$ and } \eps\in (0,1)
  \eas
  because of (\ref{fe1}).
  Therefore,
  \bas
	\big\| (\zeps-\uepsx)(\cdot,t)-(\zeps-\uepsx)(\cdot,t_k)\big\|_{(W^{1,2}(\Om))^\star} 
	&\le& c_1 \int_{t_k}^t \|\zepsx(\cdot,s)\|_{L^2(\Om)} ds
	+ 2c_1 \Kf \int_{t_k}^t \|\zepsx(\cdot,s)\|_{L^2(\Om)}^2 ds \\
	& & + c_1 \eps \int_{t_k}^t \|\uepsxx(\cdot,s)\|_{L^2(\Om)} ds
  \eas
  for all $t>t_k, k\in\N$ and $\eps\in (0,1)$,
  whence taking $c_2>0$ such that in line with Lemma \ref{lem42} we have 
  $\|\psi\|_{L^2(\Om)} \le c_2 \|\psi_x\|_{(W^{1,2}(\Om))^\star}$ for all $\psi\in C^1(\bom)$, we obtain that
  \bea{43.99}
	\|\ueps(\cdot,t)-\ueps(\cdot,t_k)\|_{L^2(\Om)}
	&\le& c_2\|\uepsx(\cdot,t)-\uepsx(\cdot,t_k)\|_{(W^{1,2}(\Om))^\star} \nn\\
	&\le& c_2 \big\| (\zeps-\uepsx)(\cdot,t)-(\zeps-\uepsx)(\cdot,t_k)\big\|_{(W^{1,2}(\Om))^\star} \nn\\
	& & + c_2 \| \zeps(\cdot,t)-\zeps(\cdot,t_k) \|_{(W^{1,2}(\Om))^\star} \nn\\
	&\le& c_1 c_2 \int_{t_k}^t \|\zepsx(\cdot,s)\|_{L^2(\Om)} ds
	+ 2c_1 c_2 \Kf \int_{t_k}^t \|\zepsx(\cdot,s)\|_{L^2(\Om)}^2 ds \nn\\
	& & + c_1 c_2 \eps \int_{t_k}^t \|\uepsxx(\cdot,s)\|_{L^2(\Om)} ds \nn\\
	& & + c_2 \| \zeps(\cdot,t)-\zeps(\cdot,t_k) \|_{(W^{1,2}(\Om))^\star} \nn\\
  \eea
  for each $t>t_k$, all $k\in\N$ and any $\eps\in (0,1)$.
  Here, the Cauchy-Schwarz inequality and (\ref{2.5}) guarantee that for all $t>t_k$ and $k\in\N$ we have
  \be{43.999}
	c_1 c_2 \eps \int_{t_k}^t \|\uepsxx(\cdot,s)\|_{L^2(\Om)} ds
	\le c_1 c_2 \eps^\frac{1}{2} \cdot \bigg\{ \eps \int_{t_k}^t \io \uepsxx^2 \bigg\}^\frac{1}{2} \cdot (t-t_k)^\frac{1}{2}
	\to 0
  \ee
  as $\eps\searrow 0$, and a combination of (\ref{51.8}) with the strong convergence property in Lemma \ref{lem65}
  and the definition in (\ref{z}) implies that 
  $\zepsx \to z_x$ in $L^2_{loc}(\bom\times (0,\infty))$ as $\eps=\eps_j\searrow 0$, while (\ref{51.87}) and (\ref{36.2}) warrant that
  for all $t\in (0,\infty)\sm N$ we have
  $\zeps(\cdot,t)\to z(\cdot,t)$ in $C^0(\bom)$ and hence also in $(W^{1,2}(\Om))^\star$ as $\eps=\eps_j\searrow 0$.
  In view of (\ref{51.5}) and the fact that $t_k\not\in N$, 
  on letting $\eps=\eps_j\searrow 0$ in (\ref{43.99}) we thus infer that for all $t\in (t_k,\infty)\sm N$ and $k\in\N$,
  \bas
	\|u(\cdot,t)-u(\cdot,t_k)\|_{L^2(\Om)}
	&\le& c_1 c_1 \int_{t_k}^t \|z_x(\cdot,s)\|_{L^2(\Om)} ds
	+ 2c_1 c_2 \Kf \int_{t_k}^t \|z_x(\cdot,s)\|_{L^2(\Om)}^2 ds \\
	& & + c_2 \| z(\cdot,t)-z(\cdot,t_k) \|_{(W^{1,2}(\Om))^\star}.
  \eas
  Again by the Cauchy-Schwarz inequality, the integrability property in (\ref{67.111}) together with 
  the stabilization feature in Lemma \ref{lem68} entails that therefore
  \bas
	\int_{t_k}^{t_k+1} \io \big( u(x,t)-u(x,t_k) \big)^2 dxdt \to 0
	\qquad \mbox{as } k\to\infty,
  \eas  
  and that thus, by (\ref{43.2}),
  \bas
	\int_{t_k}^{t_k+1} \io \big( u(x,t)-\uinf(x) \big)^2 dxdt \to 0
	\qquad \mbox{as } k\to\infty,
  \eas
  meaning that if we let $U_k(x,s):=u(x,t_k+s)$ for $x\in\Om, s\in (0,1)$ and $k\in\N$, then
  \be{43.3}
	U_k \to \uinf
	\qquad \mbox{in } L^2(\Om\times (0,1))
	\qquad \mbox{as } k\to\infty.
  \ee
  To confirm that this is possible only when $\uinf$ is trivial, we fix $\zeta\in C_0^\infty(\R)$ such that
  $\supp \zeta \subset (0,1)$ and $\int_0^1 \zeta=1$, and given $\chi\in C_0^\infty(\Om)$ we multiply the identity
  \bas
	\pa_t (\uepst - \eps \uepsxx) = - \eps \vepsxxxx+\uepsxx - \pa_x \feps(\Teps),
  \eas
  as resulting for $\eps\in (0,1)$ from (\ref{0eps}), by 
  $\bom\times [0,\infty) \ni (x,t) \mapsto \zeta(t-t_k) \chi(x)$, $k\in\N$, to obtain that
  \bea{43.9999}
	& & \hs{-20mm}
	\int_{t_k}^{t_k+1} \io \zeta''(t-t_k) \ueps(x,t) \chi(x) dxdt
	- \eps \int_{t_k}^{t_k+1} \io \zeta'(t-t_k) \uepsxx(x,t) \chi(x) dxdt \nn\\
	&=& - \eps \int_{t_k}^{t_k+1} \io \zeta(t-t_k) \vepsxx(x,t) \chi_{xx}(x) dxdt
	+ \int_{t_k}^{t_k+1} \io \zeta(t-t_k) \ueps(x,t) \chi_{xx}(x) dxdt \nn\\
	& & + \int_{t_k}^{t_k+1} \io \zeta(t-t_k) \feps(\Teps(x,t)) \chi_x(x) dxdt
  \eea
  for all $k\in\N$ and $\eps\in (0,1)$.
  Here, a reasoning similar to that in (\ref{43.999}) shows that due to the Cauchy-Schwarz 
  inequality, (\ref{2.5}) and (\ref{2.4}),
  \bas
	\bigg| 	- \eps \int_{t_k}^{t_k+1} \io \zeta'(t-t_k) \uepsxx(x,t) \chi(x) dxdt \bigg|
	&\le& \|\zeta'\|_{L^\infty(\R)} \|\chi\|_{L^2(\Om)} \cdot \eps \int_{t_k}^{t_k+1} \|\uepsxx(\cdot,t)\|_{L^2(\Om)} dt \\
	&\le& \|\zeta'\|_{L^\infty(\R)} \|\chi\|_{L^2(\Om)} \cdot \sqrt{\eps} \cdot 
		\bigg\{ \eps \int_{t_k}^{t_k+1} \io \uepsxx^2 \bigg\}^\frac{1}{2} \\[2mm]
	&\to& 0
	\qquad \mbox{as } \eps\searrow 0
  \eas
  and
  \bas
	\bigg| - \eps \int_{t_k}^{t_k+1} \io \zeta(t-t_k) \vepsxx(x,t) \chi_{xx}(x) dxdt \bigg|
	&\le& \|\zeta\|_{L^\infty(\R)} \|\chi_{xx}\|_{L^2(\R)} \cdot \eps \int_{t_k}^{t_k+1} \|\vepsxx(\cdot,t)\|_{L^2(\Om)} dt \\
	&\le& \|\zeta\|_{L^\infty(\R)} \|\chi_{xx}\|_{L^2(\R)} \cdot \sqrt{\eps} \cdot
		\bigg\{ \eps \int_{t_k}^{t_k+1} \io \vepsxx^2 \bigg\}^\frac{1}{2} \\[2mm]
	&\to& 0
	\qquad \mbox{as } \eps\searrow 0,
  \eas
  whereas (\ref{51.8}) together with (\ref{36.1}) and (\ref{fe2}) implies that
  \bas
	\int_{t_k}^{t_k+1} \io \zeta(t-t_k) \feps(\Teps(x,t)) \chi_x(x) dxdt
	\to \int_{t_k}^{t_k+1} \io \zeta(t-t_k) f(\Theta(x,t)) \chi_x(x) dxdt
	\qquad \mbox{as } \eps=\eps_j\searrow 0
  \eas
  thanks to the dominated convergence theorem.
  Since from (\ref{51.5}) we furthermore know that as $\eps=\eps_j\searrow 0$ we have
  \bas
	\int_{t_k}^{t_k+1} \io \zeta''(t-t_k) \ueps(x,t) \chi(x) dxdt
	\to 
	\int_{t_k}^{t_k+1} \io \zeta''(t-t_k) u(x,t) \chi(x) dxdt
  \eas
  and
  \bas
	\int_{t_k}^{t_k+1} \io \zeta(t-t_k) \ueps(x,t) \chi_{xx}(x) dxdt 
	\to \int_{t_k}^{t_k+1} \io \zeta(t-t_k) u(x,t) \chi_{xx}(x) dxdt,
  \eas
  letting $\eps=\eps_j\searrow 0$ in (\ref{43.9999}) thus leads to the identity
  \bas
	\int_{t_k}^{t_k+1} \io \zeta''(t-t_k) u(x,t) \chi(x) dxdt
	&=& \int_{t_k}^{t_k+1} \io \zeta(t-t_k) u(x,t) \chi_{xx}(x) dxdt \\
	& & + \int_{t_k}^{t_k+1} \io \zeta(t-t_k) f(\Theta(x,t)) \chi_x(x) dxdt
	\qquad \mbox{for all } k\in\N,
  \eas
  which by definition of the $U_k$ is equivalent to
  \bea{43.4}
	\hs{-8mm}
	\int_0^1 \io \zeta''(s) U_k(x,s) \chi(x) dxds
	&=& \int_0^1 \io \zeta(s) U_k(x,s) \chi_{xx}(x) dxds \nn\\
	& & + \int_0^1 \io \zeta(s) f(\Theta(x,t_k+s)) \chi_x(x) dxds
	\quad \mbox{for all } k\in\N.
  \eea
  But from (\ref{43.3}) we know that here
  \bas
	\int_0^1 \io \zeta''(s) U_k(x,s) \chi(x) dxds
	\to \int_0^1 \io \zeta''(s) \uinf(x) \chi(x) dxds
	= 0
	\qquad \mbox{as } k\to\infty,
  \eas
  and that
  \bas
	\int_0^1 \io \zeta(s) U_k(x,s) \chi_{xx}(x,s) dxds
	&\to& \int_0^1 \io \zeta(s) \uinf(x) \chi_{xx}(x) dxds \\
	&=& \io \uinf(x) \chi_{xx}(x) dx
	\qquad \mbox{as } k\to\infty,
  \eas
  because $\int_0^1 \zeta''(s) ds=\zeta'(1)-\zeta'(0)=0$ and $\int_0^1 \zeta(s) ds=1$.
  Similarly, Lemma \ref{lem68} and the continuity of $f$ ensure that since $\io \chi_x=0$,
  \bas
	\int_0^1 \io \zeta(s) f(\Theta(x,t_k+s)) \chi_x(x) dxds
	&\to& \int_0^1 \io \zeta(s) f(\Tinf) \chi_x(x) dxds \\
	&=& \io f(\Tinf) \chi_x(x) dx \\[2mm]
	&=& 0
	\qquad \mbox{as } k\to\infty.
  \eas
  Consequently, (\ref{43.4}) entails that $\io \uinf \chi_{xx}=0$ for all $\chi\in C_0^\infty(\Om)$, which by density implies that
  \bas
	\io \uinf \chi_{xx}=0
	\qquad \mbox{for all $\chi\in C^2(\bom)$ fulfilling $\chi=0$ on } \pO.
  \eas
  As $\uinf$ is continuous, we may here choose $\chi$ to be the classical solution of $\chi_{xx}=\uinf$ in $\Om$ with
  $\chi|_{\pO}$ to infer that, indeed, $\uinf\equiv 0$.
\qed
It remains to collect tesserae:\abs
\proofc of Theorem \ref{theo18}.\quad
  We only need to combine Lemma \ref{lem43} with Lemma \ref{lem68}.
\qed

\bigskip

{\bf Acknowlegements.} \quad
The author acknowledges support of the Deutsche Forschungsgemeinschaft (Project No. 444955436).\abs
{\bf Conflict of interest statement.} \quad
The author declares that he has no conflict of interest, 
and that he has no relevant financial or non-financial interests to disclose.\abs
{\bf Data availability statement.} \quad
Data sharing is not applicable to this article as no datasets were
generated or analyzed during the current study.\abs

\end{document}